\newtheoremstyle{mystyle}%   % スタイル名
    {5pt}%                      % 上部スペース
    {12pt}%                      % 下部スペース
    {\normalfont}%           % 本文フォント
    {}%                      % インデント量
    {\bf}%                   % 見出しフォント
    {.}%                      % 見出し後の句読点, '.'
    {\newline}%                     % 見出し後のスペース, ' ' or \newline
    {\underline{\thmname{#1}}{\ }\thmnumber{#2}\thmnote{（#3）}}
\newtheorem{my-theorem}{Theorem}[section]
\newtheorem{my-lemma}[my-theorem]{Lemma}
\newtheorem{my-proposition}[my-theorem]{Proposition}
\newtheorem{my-corollary}[my-theorem]{Corollary}
\newtheorem{my-definition}[my-theorem]{Definition}
\newtheorem{my-puzzle}[my-theorem]{Puzzle}
\newtheorem{my-question}[my-theorem]{Question}
\newenvironment{theorem}[1][]{
  \begin{my-theorem}[#1]%
  \pushQED{\qed}}%
  {\popQED\end{my-theorem}
}
\newenvironment{lemma}[1][]{
  \begin{my-lemma}[#1]%
  \pushQED{\qed}}%
  {\popQED\end{my-lemma}
}
\newenvironment{proposition}[1][]{
  \begin{my-proposition}[#1]%
  \pushQED{\qed}}%
  {\popQED\end{my-proposition}
}
\newenvironment{corollary}[1][]{
  \begin{my-corollary}[#1]%
  \pushQED{\qed}}%
  {\popQED\end{my-corollary}
}
\newenvironment{definition}[1][]{
  \begin{my-definition}[#1]%
  \pushQED{\qed}}%
  {\popQED\end{my-definition}
}
\newenvironment{puzzle}[1][]{
  \begin{my-puzzle}[#1]%
  \pushQED{\qed}}%
  {\popQED\end{my-puzzle}
}
\newenvironment{question}[1][]{
  \begin{my-question}[#1]%
  \pushQED{\qed}}%
  {\popQED\end{my-question}
}
\renewcommand{\proofname}{Proof}
\renewenvironment{proof}[1][\proofname]{\par
  \pushQED{\qed}%
  \normalfont \topsep6\p@\@plus6\p@\relax
  \trivlist
  \item[\hskip\labelsep
        \itshape
    {\bf\underline{#1}\@addpunct{\ }}]\ignorespaces
    % {\bf\underline{#1}\@addpunct{.}}]\ignorespaces % ピリオドあり
}{%
  \popQED\endtrivlist\@endpefalse
}
\newcommand{\nee}[1]{#1 ^{\neq}}
\newcommand{\dash}[1]{#1 ^{\prime}}
\newcommand{\op}[2]{\langle #1,#2 \rangle}
\newcommand{\map}[3]{#1\colon #2 \to #3}
\newcommand{\cv}[3]{#1\lfloor #2|#3 \rfloor}
\DeclareMathOperator{\dom}{dom}
\DeclareMathOperator{\ran}{ran}
\newcommand{\restric}{\mbox{$\upharpoonright$}}
\newcommand{\MapSet}[2]{\mbox{}^{#1}#2}
\newcommand{\Match}[2]{\operatorname{Match}(#1,#2)}
\newcommand{\CanSee}[1]{\raisebox{-0.4ex}{\mbox{$\overrightarrow{#1}$}}}
\newcommand{\SubPrSet}[1]{{A}^{[#1]}}
\newcommand{\HatGame}[4]{\langle #1,#2,#3,#4 \rangle}
\newcommand{\strategy}[1]{\overline{\sigma_{#1}}}
\newcommand{\PS}[4]{
  \mathcal{P}( #1, \mathrm{#2} #3 #4 )
}
\newcommand{\FEPS}[1]{
  \mathcal{P}( #1, \text{finite error} )
}
\newcommand{\robustPS}[1]{
  \mathcal{P}( #1, \text{robust} )
}
\title{A General Theorem for Non-Simultaneous Hat Guessing Puzzles}
\author{Souji Shizuma}
\date{\today}
\begin{document}

\maketitle

\begin{abstract}
  The prisoners and hats puzzle, or simply the hat puzzle, 
  is a family of games in which a group of prisoners are each assigned a colored hat and are asked to guess the color of their own hat.
  Various versions of the puzzle arise depending on the number of prisoners, 
  the number of possible hat colors, and the information available to them before and after the game begins.
  These puzzles are broadly classified according to whether the prisoners’ declarations are made simultaneously or non-simultaneously.
  In this paper we present a general theorem concerning the existence of a winning strategy when the declarations are non-simultaneous.
  We also discuss the relationship between the construction of such strategies and the Axiom of Choice, 
  as well as their connection to the simultaneous-declaration case.
\end{abstract}

\section{Introduction}
\label{section:Introduction}

``Prisoners and hat puzzles'' are famous puzzles 
in which a jailer puts colored hats on prisoners 
and each prisoner must guess the color of his own hat which he cannot see. 
In this paper we simply call them ``hat puzzles''. 

There are many mathematical studies 
on hat puzzles generalized to infinitely many prisoners, 
mainly in axiomatic set theory.

A textbook on these infinite hat puzzles was published in 2013 \cite{Hat001}.

First we consider hat puzzles 
in which prisoners declare their guesses simultaneously.
For instance, 
consider the following puzzle:

\begin{puzzle}[2-person, 2-color, simultaneous 
declaration]
  \label{puz:22VcompleteS}
  A jailer places two prisoners in a room 
  and puts a hat on each prisoner.
  Each hat is colored either black or white.
  Neither prisoner can see the color of his own hat,
  but each sees the other's hat.
  Furthermore,
  after they are placed in the room,
  they can have no communication at all.
  Under these conditions, 
  both prisoners must simultaneously declare their guesses,
  either ``black'' or ``white''.
  If at least one prisoner guesses correctly,
  both prisoners win and are released;
  otherwise, 
  both lose and are executed.
  Naturally prisoners do not know how the jailer will place the hats.
  However,
  before they are brought into the room,
  the jailer informs them of all the rules 
  and prisoners can confer a strategy in advance.
  How can prisoners win this game?
\end{puzzle}

A winning strategy
 (which ensures that at least one prisoner guesses correctly) 
will be found in Theorem \ref{thm:FFVcompleteS}.

Here we characterize a hat puzzle
by the following five parameters:
\begin{itemize}
  \item[(E1)] The number of prisoners
  \item[(E2)] The number of colors
  
  \item[(E3)] How each prisoner can see other prisoners' hats
    (a visibility graph)
  
  \item[(E4)] How and when the prisoners declare thieir guesses
  \item[(E5)] The winning condition for prisoners
\end{itemize}
In Puzzle \ref{puz:22VcompleteS}, 
these parameters are instantiated as follows:
\begin{itemize}
  \item[(E1)] There are 2 prisoners.
  \item[(E2)] There are 2 colors.
  \item[(E3)] Each prisoner sees all hats other than their own 
  	(i.e., each sees the other's hat).
  \item[(E4)] The prisoners declare simultaneously.
  \item[(E5)] At least one prisoner must guess correctly.
\end{itemize}

By varying these parameters,
we obtain different hat games,
and the main question in hat puzzle 
research is to determine whether a strategy exists 
ensuring the given winning condition 
for all possible hat assignments by the jailer.

By considering puzzles
in which prisoners declare 
non-simultaneously,
we obtain a variety of formats.
Here is a relatively simple example:

\begin{puzzle}[5-person, 2-color, non-simultaneous declaration, first type]
  \label{puz:52VcompleteA}
  Modify Puzzle \ref{puz:22VcompleteS} 
  by changing the following parameters:
  \begin{itemize}
    \item[(E1)] There are now 5 prisoners.
    \item[(E4)] 
      One prisoner declares first, 
      and after hearing that declaration,
      the other four prisoners declare simultaneously.
    \item[(E5)] At least 4 prisoners must be correct.
  \end{itemize}
  How can prisoners win this game?
\end{puzzle}

In this case the first prisoner can serve as a ``hint giver'',
enabling the other four to guess their own hats correctly,
thereby achieving the winning condition.
A more concrete version of this puzzle 
is found in Lemma \ref{lem:FGVA}.

Another non-simultaneous declaration 
puzzle is:

\begin{puzzle}[5-person, 2-color, non-simultaneous 
declaration, second type]
  \label{puz:52VstairsA}
  Modify Puzzle \ref{puz:52VcompleteA} as follows:
  \begin{itemize}
    \item[(E3)] 
      The jailer numbers the prisoners 
      $0$ through $4$ and arranges them in a single file.
      Each prisoner sees only the hats of prisoners 
      with a larger number.
    \item[(E4)] 
      The prisoners declare one by one 
      in ascending order of their numbers.
  \end{itemize}
  Again,
  how can prisoners win this game?
\end{puzzle}

In Puzzle \ref{puz:52VstairsA},
each prisoner sees fewer hats than in Puzzle \ref{puz:52VcompleteA},
but they hear more of the previous guesses. 
For instance,
prisoner 2 can no longer see the hat of prisoner 1,
but can hear what prisoner 1 says. 
If prisoner 1 is indeed correct,
prisoner 2 can work off that guess as a clue,
continuing in the same manner as Puzzle \ref{puz:52VcompleteA}.
A more concrete version of this solution 
is found in Lemma \ref{lem:FGVA}.

We can change (E4) in numerous ways according to the number of prisoners.
If the number of prisoners is infinite,
the variety of speaking orders is even larger.
For instance,
with countably many prisoners,
one can set up puzzles in which they declare 
in several distinct groupings,
or speak in a single file,
or speak in blocks of size $2^n$ repeated countably many times, 
and so on.

In fact,
there is related research concerning countably many prisoners 
in a non-simultaneous declaration puzzle 
like Puzzle \ref{puz:52VcompleteA} or Puzzle \ref{puz:52VstairsA}.
For example, Theorem 3.3.2 of \cite{Hat001} 
deals with
a version of Puzzle \ref{puz:52VcompleteA} 
extended to countably many prisoners,
\cite{Hat002} studies 
a countably infinite extension of the scenario 
in Puzzle \ref{puz:52VstairsA},
and \cite{Hat006} takes an even more general approach 
to such countably infinite puzzles,
including the case of three or more colors.

This paper presents 
a general theorem about non-simultaneous puzzles 
that covers more variety of declaration orders 
than in Puzzle \ref{puz:52VcompleteA} 
or Puzzle \ref{puz:52VstairsA},
along with the extension to infinitely many prisoners.
We also investigate the connection with the 
simultaneous declaration case,
similarly to what has been done in previous research,
while extending the number of prisoners and colors 
in a general manner.

In Section \ref{section:Formalization} 
we formalize these puzzles 
from a mathematical perspective, primarily employing set theory.
In Section \ref{section:PreviousResearch} 
we review previous results 
on simultaneous declaration
hat puzzles (as featured in \cite{Hat001}) 
for both finite and infinite cases, 
including discussions on the role of the Axiom of Choice 
in the infinite case.
In Section \ref{section:FinitePrisoner} 
we focus on 
non-simultaneous declaration puzzles 
with a finite number of prisoners 
and proves some general theorems.
In Section \ref{section:InfinitePrisoner} 
we discuss
non-simultaneous declaration puzzles 
with infinitely many prisoners 
and establish general theorems in parallel to the simultaneous case;
here, again, the Axiom of Choice will come into play.
Finally,
In Section \ref{section:TwoTypeInInfinitePuzzle} 
we investigate 
relationship between general theorems for the simultaneous 
and non-simultaneous cases,
introducing an alternative statement about a different puzzle 
that can serve in place of the Axiom of Choice.

\section{Puzzle formalization}
\label{section:Formalization}

In this section, we introduce  
formal tools necessary to discuss hat puzzles 
using sets and functions,
a style primarily inspired by \cite{Hat001}. 
We begin with some notation for sets and functions.

For a set $X$ and an element $x \in X$,
$\nee{x}$ abbreviates
``some element in $X$ distinct from $x$''.
Let $|X|$ denote the cardinality of $X$.
We use $\op{a}{b}$ to denote the ordered pair of $a$ and $b$.
In this paper,
directed graphs and functions 
are expressed as sets of ordered pairs.
We regard a directed graph $G$ on the set $X$
as $G \subseteq X^2$.

We write $\map{f}{X}{Y}$ to mean $f$ is a function from $X$ to $Y$.
In set-theoretic notation,
$\map{f}{X}{Y}$ means $f \subseteq X\times Y$ and for each $x \in X$,
there is a unique $y \in Y$ such that $\op{x}{y} \in f$.
If $\op{x}{y} \in f$,
we  write $f(x) = y$.
The set of all functions from $X$ to $Y$ 
is denoted by $\MapSet{X}{Y}$.
For $\map{f}{X}{Y}$, $x\in X$ and an element $a$,
let $\cv{f}{x}{a}$  be the function from $X$ to $Y\cup \{a\}$ obtained by 
changing the value of $f(x)$ to $a$. 
Formally,

\[  \cv{f}{x}{a} 
:= (f \setminus \{\op{x}{f(x)}\})\cup \{\op{x}{a}\}.\]

Likewise for $x, \dash{x} \in X$ and $a, \dash{a}$,
we write
\[ 
  \cv{f}{x,\dash{x}}{a,\dash{a}} 
	:= (\; f\setminus \{\op{x}{f(x)},\op{\dash{x}}{f(\dash{x})}\} \;)
	\;\cup\; \{\op{x}{a},\op{\dash{x}}{\dash{a}}\}. 
\]
We say $g$ is an extension of $f$ when $f \subseteq g$,
that is,
$\dom(f)\subseteq \dom(g)$ and for all $x\in \dom(f)$, $f(x)=g(x)$.
Let $f\restric \dash{X}$ denote the restriction of $f$
to $\dash{X}\subseteq \dom(f)$ 
that is,
$f\restric \dash{X} := f \cap (\dash{X} \times \ran(f))$.

Now we introduce formal notation for hat puzzles.
Let $A$ be the set of prisoners and $K$ the set of colors,
with $2 \leq |A|,|K|$.
An element $f \in \MapSet{A}{K}$ is called a coloring, 
representing an assignment of
hat colors to all prisoners. 

A directed graph $V \subseteq A^2$ 
without loops 
(that is, 
$\op{a}{a} \notin V$ for all $a\in A$) 
is called a visibility graph on $A$,
where $\op{a}{b} \in V$ indicates 
that prisoner $a$ sees prisoner $b$'s hat.
We write $a\CanSee{V} b$ for $\op{a}{b}\in V$.

$V(a)$ denotes the set of prisoners that $a$ sees 
(that is,
$V(a) = \{b\mid \op{a}{b}\in V\}$).
Note that, if
two colorings $f,g$ satisfy$f\restric V(a)=g\restric V(a)$,
prisoner $a$ cannot distinguish those two.

Let $A$ be a set of prisoners,
and $V$ a visibility graph on $A$.
Let $\alpha$ be an ordinal and $I$ be 
a surjection from $A$ onto $\{1,\dots,\alpha\}$.
We call $I$ an inning function on $A$.
For $\beta\in \ran(I)$,
define $\SubPrSet{\beta} := \{ a \in A \mid I(a)=\beta \}$.
Hence $\SubPrSet{1}$ is the set of prisoners 
who declare in the initial inning,
$\SubPrSet{\beta}$ is the set 
of prisoners who declare in the $\beta$-th inning, 
and so on.
Let $H(a) := \{ b \mid I(b)<I(a)\}$.
$H(a)$ is the set of prisoners 
whose declaration $a$ can hear.
Let $IN=\max(\ran(I))$. 
We also use notation like $\SubPrSet{\beta - \gamma}$ 
to abbreviate $\SubPrSet{\beta}\cup\dots\cup \SubPrSet{\gamma}$,
and write $\SubPrSet{\beta-}$ for $\SubPrSet{\beta- IN}$.

We write $\HatGame{A}{K}{V}{I}$ 
for the hat game (or simply game) determined
by these four parameters.
If $IN=1$,
we write $\HatGame{A}{K}{V}{1}$.
In the hat game $\mathcal{G} = \HatGame{A}{K}{V}{I}$,
for each $a\in A$ and $f\in \MapSet{A}{K}$,
define $v_a^f = f\restric V(a)$ as 
prisoner $a$'s view, 
that is, 
the part of the coloring $f$ which prisoner $a$ sees.
When $IN=1$,
each prisoner can only base their guess on $v_a^f$,
hence 
a strategy for prisoner $a$ in such a game 
is a function $\map{\strategy{a}}{\MapSet{V(a)}{K}}{K}$.
In the puzzle,
each prisoner declares 
$\sigma_a(f)=\strategy{a}(v_a^f)$.
If two colorings look the same to prisoner $a$,
that is, $f\restric V(a)=g\restric V(a)$,
then $\sigma_a(f)=\sigma_a(g)$.

When $IN\geq 2$, 
the strategy for prisoner $a\in \SubPrSet{m}$ (with $m\geq 2$) 
may also depend on the declarations 
heard from all prisoners in $H(a)$.
If $\{\sigma_b\}_{b\in H(a)}$ are already defined,
and for each $f\in \MapSet{A}{K}$ we define 
$h_a^f = \{\op{b}{\sigma_b(f)} \mid b\in H(a)\}$,
then the strategy $\strategy{a}$ is a function 
\[ \map{\strategy{a}}{\MapSet{H(a)}{K}\times \MapSet{V(a)}{K}}{K}, \]
and we set
\[\sigma_a(f) = \strategy{a}(h_a^f, v_a^f).\]
If two colorings $f$ and $g$
share the same view from $a$'s perspective 
($f\restric V(a)=g\restric V(a)$) 
and the same declarations 
from $H(a)$ ($\sigma_b(f)=\sigma_b(g)$ for all $b\in H(a)$),
then $\sigma_a(f)=\sigma_a(g)$.

Prisoner $a$ guesses correctly
in coloring $f$ if $\sigma_a(f)=f(a)$,
and guesses incorrectly otherwise.
A predictor $P$ 
for $\mathcal{G}$ 
is a function $\map{P}{\MapSet{A}{K}}{\MapSet{A}{K}}$ 
such that there exists a set of strategies 
$\{\sigma_a\}_{a\in A}$ producing $P$ by
\[  P(f) = \{\op{a}{\sigma_a(f)} \mid a\in A\}.\]
We call $\sigma_a(f)$ the guess that $P(f)$ 
makes for prisoner $a$ under coloring $f$.
If $P(f)(a)=f(a)$, 
then $a$ guesses correctly under $f$.
If $f,g$ are colorings,
we let $\Match{f}{g} = \{a \mid f(a)=g(a)\}$
be the set of prisoners on which $f$ and $g$ match. 
Hence $\Match{f}{P(f)}$ is the set of prisoners 
who guess correctly under $f$ for predictor $P$.
Hence
\[  A\setminus \Match{f}{P(f)} = \{a \mid P(f)(a)\neq f(a)\}.\]
For a hat game $\mathcal{G}=\HatGame{A}{K}{V}{I}$,
 we define two families of predictors:
\[
  \PS{\mathcal{G}}{c}{\geq}{n} := \left\{ P \,\middle|\,
    \forall f\in \MapSet{A}{K},\ |\Match{f}{P(f)}|\geq n
  \right\},
\]
\[  \PS{\mathcal{G}}{e}{\leq}{n} 
	:= \left\{ P \,\middle|\,    \forall f\in \MapSet{A}{K},\ 
		|A\setminus \Match{f}{P(f)}|\leq n  \right\}.\]
That is, $\PS{\mathcal{G}}{c}{\geq}{n}$ 
is the set of all predictors guaranteeing 
that at least $n$ prisoners guess correctly
for any assignment $f$,
while $\PS{\mathcal{G}}{e}{\leq}{n}$ is the set guaranteeing 
that at most $n$ prisoners guess incorrectly for any assignment $f$. 

Thus the five parameters 
from the puzzle descriptions correspond formally to:
\begin{itemize}
  \item[(E1)] The cardinality of $A$,
  \item[(E2)] The cardinality of $K$,
  \item[(E3)] The visibility graph,
  \item[(E4)] The inning function,
  \item[(E5)] The required property of a predictor 
  	that ensures 
    winning condition for prisoners.
\end{itemize}

\section{Previous research on simultaneous puzzles}
\label{section:PreviousResearch}

We now review existing results regarding puzzles 
in which prisoners declare 
simultaneously,
focusing on the case where the number of prisoners is finite.
Since these results have been discussed in prior research,
there is nothing essentially new about the arguments below;
we simply restate them in our own form,
allowing comparison with the non-simultaneous case.

In simultaneous declaration puzzles, 
we often aim to construct $P\in \PS{\mathcal{G}}{c}{\geq}{1}$,
that is, 
a predictor that guarantees at least one prisoner guesses correctly.
For the finite-color case,
if the number of prisoners is not less than the number of colors and every prisoner sees all the other hats (like Puzzle \ref{puz:22VcompleteS}),
then such a predictor exists (Theorem \ref{thm:FFVcompleteS}).
If there are only 2 colors,
even with a sparser visibility graph,
we can still construct such a predictor (Theorem \ref{thm:F2VcyclicS}).
However,
if there are infinitely many colors,
no predictor can guarantee 
at least one correct guess 
in every assignment (Theorem \ref{thm:FIVanyS}).
We start with two lemmas used in the proofs of these statements.

First,
we observe a known result on the average number 
of correct guesses 
across all colorings in a predictor 
for a simultaneous puzzle.
The proof is based on Lemma 2.2.1 of \cite{Hat001}.

\begin{lemma}[]
  \label{lem:average}
  Let $A,K$ be finite sets,
  and consider a game $\mathcal{G}=\HatGame{A}{K}{V}{1}$.
  For any predictor $P$,
  \[    
    \frac{\text{(total number of correct guesses over all colorings)}}{\text{(number of all colorings)}}
    \;=\; \frac{|A|}{|K|}.  
  \] \qedhere
\end{lemma}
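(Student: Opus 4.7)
The plan is a double-counting argument: fix a prisoner $a$ and count the colorings under which $a$ guesses correctly, then sum over all $a$ and divide by the total number of colorings $|K|^{|A|}$.

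First I would unpack the definition of strategy in the $IN=1$ case. The guess $\sigma_a(f)$ depends only on $v_a^f = f\restric V(a)$, so for each view $v\in \MapSet{V(a)}{K}$ the strategy $\strategy{a}$ outputs a single color $k_v := \strategy{a}(v)$. I would then partition $\MapSet{A}{K}$ according to the view $v$ that a given coloring induces on $V(a)$: for each fixed view $v$, the extensions of $v$ to all of $A$ are parametrised by the colors assigned on $A\setminus V(a)$, so there are exactly $|K|^{|A|-|V(a)|}$ such extensions.

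Next, since $V$ has no loops we have $a\notin V(a)$, so among those extensions the value $f(a)$ is entirely free. Exactly a $1/|K|$ fraction of them satisfy $f(a) = k_v$, namely $|K|^{|A|-|V(a)|-1}$ colorings. Summing over the $|K|^{|V(a)|}$ possible views gives
\[
  \#\{\,f \mid \sigma_a(f)=f(a)\,\} \;=\; |K|^{|V(a)|}\cdot |K|^{|A|-|V(a)|-1} \;=\; |K|^{|A|-1},
\]
independently of the strategy and of the visibility graph. Summing this over $a\in A$ gives a total of $|A|\cdot |K|^{|A|-1}$ correct guesses across all colorings, and dividing by the $|K|^{|A|}$ colorings yields the desired average $|A|/|K|$.

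There is no real obstacle here; the only subtle point is the observation that the absence of loops in $V$ guarantees $a\notin V(a)$, so that after fixing the view $v$ the color $f(a)$ is still a free coordinate and the $1/|K|$ fraction argument applies cleanly. The result does not depend on $V$ or on the choice of strategies, which is exactly what makes it a useful averaging tool for the subsequent theorems.
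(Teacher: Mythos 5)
Your proof is correct and follows essentially the same double-counting argument as the paper: for each prisoner $a$, the guess is determined independently of $f(a)$ (since $a\notin V(a)$), so exactly $|K|^{|A|-1}$ colorings make $a$ correct, and summing over $a$ gives the stated average. The only cosmetic difference is that you group colorings by the view on $V(a)$ while the paper groups them by the partial coloring on $A\setminus\{a\}$; both yield the identical count.
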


\begin{proof}
  Fix any $a\in A$ and consider 
  all partial colorings $f_a$ on $A\setminus\{a\}$.
  For each such $f_a$,
  there are $|K|$ ways to extend it to a coloring 
  in $\MapSet{A}{K}$ by assigning a color to prisoner $a$.
  Out of these $|K|$ possible extensions,
  exactly one will be correct for prisoner $a$ under $P$,
  namely the one where the color of $a$ 
  is $P(f_a\cup\{\op{a}{\cdot}\})(a)$.
  Hence the number of colorings 
  in which $a$ is correct 
  equals $|\MapSet{A\setminus\{a\}}{K}|=|K|^{|A|-1}$.
  Summing over all $a\in A$,
  the total number of correct guesses 
  across all prisoners and all colorings is
  \[    |A| \cdot |K|^{|A|-1},  \]
  while the total number of colorings is $|K|^{|A|}$.
  Thus
  \[    
    \frac{\text{(total correct)}}{\text{(total colorings)}} 
    \;=\; \frac{|A|\cdot |K|^{|A|-1}}{|K|^{|A|}} 
    \;=\; \frac{|A|}{|K|},  
  \]
  establishing the claimed average.
\end{proof}

It immediately follows that 
if $|K|$ is larger than the finite number $|A|$,
no predictor can guarantee 
at least one correct guess in every coloring.

\begin{corollary}[]
  \label{cor:A<K-notMP}
  Let $\mathcal{G}=\HatGame{A}{K}{V}{1}$ be a game where $A$ is finite and $|A|<|K|$.
  Then no predictor can guarantee that 
  at least one prisoner guess correctly.
  In other words,
  $\PS{\mathcal{G}}{c}{\geq}{1} = \emptyset$.
\end{corollary}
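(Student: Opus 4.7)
The plan is to derive this as an immediate pigeonhole consequence of Lemma \ref{lem:average}. Fix an arbitrary predictor $P$; the goal is to produce a single coloring $f$ with $\Match{f}{P(f)}=\emptyset$, which will show $P\notin\PS{\mathcal{G}}{c}{\geq}{1}$ and, by arbitrariness of $P$, finish the proof.

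Assuming $K$ is finite (the setting in which Lemma \ref{lem:average} is stated), the lemma gives that the mean value of $|\Match{f}{P(f)}|$ taken over all colorings $f\in\MapSet{A}{K}$ equals $|A|/|K|$. Under the hypothesis $|A|<|K|$ this mean is strictly less than $1$, and since each $|\Match{f}{P(f)}|$ is a nonnegative integer, at least one coloring $f_0$ must satisfy $|\Match{f_0}{P(f_0)}|=0$, exactly what is needed.

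If one also wants to cover the case where $K$ is infinite, the same idea works after passing to a finite subset $K'\subseteq K$ with $|A|<|K'|$ and considering only colorings in $\MapSet{A}{K'}$. Re-examining the proof of Lemma \ref{lem:average}, for each $a\in A$ and each partial coloring on $A\setminus\{a\}$ valued in $K'$, the guess made by $P$ at $a$ is some fixed element of $K$, so at most one of its $|K'|$ $K'$-valued extensions makes $a$ correct. The resulting average bound $|A|/|K'|<1$ then yields a $K'$-valued $f_0$ with no correct guesses by the same integer-pigeonhole step.

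No genuine obstacle is anticipated; the argument is essentially a one-line corollary of the averaging lemma. The only delicate point is in the infinite-$K$ reduction, where one must note that a predictor's guess falling outside the chosen finite palette $K'$ can only decrease, never increase, the number of correct guesses on $K'$-valued colorings, so the averaging inequality still goes through.
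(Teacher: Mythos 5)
Your proposal is correct and follows essentially the same route as the paper: the same case split on whether $K$ is finite, the same direct application of Lemma \ref{lem:average} in the finite case, and the same finite-palette restriction in the infinite case. The only cosmetic difference is that the paper redirects out-of-palette guesses to a fixed color so the lemma applies verbatim to the restricted game, whereas you re-run the counting argument as an inequality; both are sound.
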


\begin{proof}
  We divide in two cases:
  \begin{itemize}
    \item[] If $K$ is finite: 
      Since $|A|<|K|$,
      we have $\tfrac{|A|}{|K|}<1$,
      so by Lemma \ref{lem:average},
      every predictor has an average of less than one correct guess per coloring,
      making $\PS{\mathcal{G}}{c}{\geq}{1}=\emptyset$.
    \item[] If $K$ is infinite: 
      Suppose $\PS{\mathcal{G}}{c}{\geq}{1}\neq\emptyset$,
      so some $P$ guarantees at least one correct guess under every coloring.
      Since $A$ is finite,
      let $n-1=|A|$.
      Pick any $n$-element subset $K_n\subseteq K$.
      Consider $C_n=\{\map{f}{A}{K_n}\mid \ran(f)\subseteq K_n\}$,
      i.e.\ all colorings from $A$ into these $n$ chosen colors.
      Fix an arbitrary $k\in K_n$ and define $P_n$ as follows:
      \[ 
        P_n(f)(a) = 
          \begin{cases}
            P(f)(a) & \text{if }P(f)(a)\in K_n,\\
            k & \text{otherwise}.
          \end{cases}  
      \]
      Then $P_n$ is a predictor in the game $\mathcal{G}_n=\HatGame{A}{K_n}{V}{1}$,
      and it still guarantees at least one correct guess under each $f\in C_n$.
      By Lemma \ref{lem:average},
      every predictor in $\mathcal{G}_n$ must have 
      an average of $|A|/|K_n|=(n-1)/n<1$ correct guesses.
      This contradicts $P_n$ apparently guaranteeing 
      at least one prisoner correct in every coloring. 
  \end{itemize}
\end{proof}

Next,
we present the classical result that if $|A|=|K|$ is finite 
and the visibility graph is complete (each prisoner sees every other hat),
then there is a predictor ensuring at least one correct guess for every coloring.
The following proof is based on \cite{Hat001} (Theorem 2.2.2).

\begin{theorem}[]
  \label{thm:FFVcompleteS}

  In the game $\HatGame{A}{K}{V}{1}$ for a finite set $K$ satisfying $|A| = |K|$,
  the following statements are equivalent.

  \begin{itemize}
    \item[(a)] $V$ is a complete graph.
    \item[(b)] 
      There is a predictor guaranteeing 
      at least one correct guess 
      in every coloring,
      that is,
      $\PS{\mathcal{G}}{c}{\geq}{1}\neq \emptyset$. \qedhere
  \end{itemize}
\end{theorem}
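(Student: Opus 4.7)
The plan is to handle the two implications separately, using Lemma \ref{lem:average} as the key quantitative tool for the harder direction.

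For $(a) \Rightarrow (b)$, I would use the classical ``sum mod $n$'' construction. Set $n = |A| = |K|$ and identify both $A$ and $K$ with $\{0, 1, \ldots, n-1\}$. Define $\strategy{a}$ so that prisoner $a$ guesses the unique color $c \in K$ for which $c + \sum_{b \in V(a)} f(b) \equiv a \pmod{n}$. Because $V$ is complete, $V(a) = A\setminus\{a\}$, so this sum is well-defined from $a$'s view. For any coloring $f$, let $s = \sum_{b \in A} f(b) \bmod n$; then prisoner $s$ guesses correctly, giving a predictor in $\PS{\mathcal{G}}{c}{\geq}{1}$.

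For $(b) \Rightarrow (a)$, I would argue by contrapositive. Suppose $V$ is not complete; pick $a_0, a_1 \in A$ with $a_1 \notin V(a_0)$, and suppose for contradiction that some $P \in \PS{\mathcal{G}}{c}{\geq}{1}$ exists. By Lemma \ref{lem:average}, the average number of correct guesses per coloring is $|A|/|K| = 1$; combined with the assumption that each coloring yields at least one correct guess, this forces \emph{exactly one} correct guess per coloring. Now fix an arbitrary partial coloring $g : A \setminus \{a_0, a_1\} \to K$ and let $C_g$ be the $|K|^2$ extensions to $A$. Since $a_0, a_1 \notin V(a_0)$, the guess $\sigma_{a_0}(f)$ depends on neither $f(a_0)$ nor $f(a_1)$, hence is constant on $C_g$ with some value $s_0$. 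Since $a_1 \notin V(a_1)$ (no loops), $\sigma_{a_1}(f)$ is independent of $f(a_1)$, so $\sigma_{a_1}(f) = s_1(f(a_0))$ for some function $s_1 : K \to K$. Choosing $f^* \in C_g$ with $f^*(a_0) = s_0$ and $f^*(a_1) = s_1(s_0)$ makes both $a_0$ and $a_1$ guess correctly, contradicting the exactly-one-correct count.

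I expect $(a) \Rightarrow (b)$ to be routine, and the main obstacle lies in $(b) \Rightarrow (a)$: the essential insight is to leverage Lemma \ref{lem:average} to upgrade ``at least one correct'' into ``exactly one correct'' per coloring, which is what converts a single missing visibility edge $\op{a_0}{a_1}$ into an actual contradiction. Once that observation is in place, the remaining argument is a short counting step.
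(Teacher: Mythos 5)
Your proposal is correct and follows essentially the same route as the paper: the standard ``sum mod $n$'' predictor for (a)$\Rightarrow$(b), and for (b)$\Rightarrow$(a) the combination of Lemma \ref{lem:average} (average exactly $|A|/|K|=1$) with a coloring, built from the missing edge $\op{a_0}{a_1}$, in which two prisoners guess correctly. The only cosmetic difference is that you phrase the contradiction as ``exactly one correct per coloring'' being violated, whereas the paper concludes that some coloring must have zero correct guesses; these are the same counting argument.
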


\begin{proof}
  Let $n=|A|=|K|$.
  \begin{itemize}
    \item[] \underline{(a)$\Rightarrow$(b)}
      Label the prisoners $A=\{0,\dots,n-1\}$ 
      and identify $K$ 
      with the additive group 
      $\mathbb{Z}/n\mathbb{Z}=\{\overline{0},\dots,\overline{n-1}\}$
      Define prisoner $m$'s strategy for each $f_m\in \MapSet{V(m)}{K}$ by
        \[  
          \strategy{m}(f_m) = \overline{m}-\sum_{b\in \dom(f_m)} f_m(b).     
        \]
      For each coloring $f\in \MapSet{A}{K}$,
      let $P(f)$ be given by $\sigma_m(f)$ for each $m\in A$. 
      In any coloring $f\in \MapSet{A}{K}$,
      if $\displaystyle \sum_{b\in A} f(b)=\overline{m}$, 
      then prisoner $m$ guesses correctly.
      In fact,
      for $m$,
      \[    
        P(f)(m)=\sigma_m(f)
      	=\overline{m}-\sum_{b\in \dom(f)} f(b).      
      \]
      Since the visibility is complete,
      $\dom(f)=V(m)=A\setminus\{m\}$,
      so
      \begin{align*}
        P(f)(m)&=\overline{m}-\sum_{b\in A\setminus\{m\}} f(b)
        =\sum_{b\in A} f(b)-\sum_{b\in A\setminus\{m\}}f(b)
        =f(m).
      \end{align*}
      Hence prisoner $m$ 
      guesses correctly 
      and $\PS{\mathcal{G}}{c}{\geq}{1}\neq\emptyset$.

    \item[] \underline{(b)$\Rightarrow$(a)}
      For the contrapositive,
      assume $V$ is not complete,
      that is, 
      there exist distinct $a,b$ with $\op{a}{b}\notin V$.
      Then prisoner $a$ does not see $b$'s hat.
      We show any predictor $P$ fails to guarantee 
      at least one correct guess for all colorings,
      that is, 
      we can find a coloring 
      where everyone guesses incorrectly.

      Let $P$ be arbitrary.
      There must be a coloring $f_a\in \MapSet{A}{K}$ where $a$ is correct:
      set $f_a=\cv{f}{a}{P(f)(a)}$ for some coloring $f\in \MapSet{A}{K}$.
      Now define $f=\cv{f_a}{b}{P(f_a)(b)}$.
      Observe that from $a$'s viewpoint,
      $f$ and $f_a$ differ only on $b$'s hat,
      which $a$ cannot see;
      thus $\sigma_a(f)=\sigma_a(f_a)$,
      so $a$ remains correct. Similarly,
      $b$ is correct in $f$ because $P(f)(b)=P(f_a)(b)$.
      So in $f$,
      at least these two prisoners are correct.
      Meanwhile from $\frac{|A|}{|K|}=1$ and the argument in Lemma \ref{lem:average},
      there must also be a coloring where no one is correct.
      Hence $P$ is not in $\PS{\mathcal{G}}{c}{\geq}{1}$,
      and so no predictor can be in $\PS{\mathcal{G}}{c}{\geq}{1}$. \qedhere
  \end{itemize}
\end{proof}

Next,
again from \cite{Hat001} (Theorem 2.2.2),
we have the well-known 2-color result.
Even if each prisoner does not see all the other hats,
there can still be a predictor guaranteeing at least one correct guess 
in every coloring if the visibility graph is ``cyclic.''
(Puzzle \ref{puz:22VcompleteS} is a special case with a complete graph.)

\begin{theorem}
  \label{thm:F2VcyclicS}
  For a game $\HatGame{A}{K}{V}{1}$ 
  where $A$ is finite and $|K|=2$, 
  the following are equivalent:
  \begin{enumerate}
    \item[(a)] $V$ is cyclic.
    \item[(b)] 
      There is a predictor guaranteeing 
      at least one correct guess 
      in every coloring, 
      that is,
      $\PS{\mathcal{G}}{c}{\geq}{1}\neq \emptyset$. \qedhere
  \end{enumerate}
\end{theorem}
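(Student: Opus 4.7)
The plan is to prove the two implications separately: (a)$\Rightarrow$(b) by explicitly constructing a winning predictor via a parity argument along a directed cycle, and (b)$\Rightarrow$(a) by contraposition and induction, peeling off source vertices of a directed acyclic graph.

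For (a)$\Rightarrow$(b), I would first pick a directed cycle $a_0 \CanSee{V} a_1 \CanSee{V} \cdots \CanSee{V} a_{k-1} \CanSee{V} a_0$ in $V$ and identify $K$ with the field $\mathbb{F}_2$. I would then set $\strategy{a_i}(f_i) = f_i(a_{(i+1) \bmod k}) + s_i$ for each cycle prisoner $a_i$, where the bits $s_i \in \mathbb{F}_2$ are constants chosen below, while every prisoner outside the cycle simply declares a fixed color. Prisoner $a_i$ is then correct under a coloring $f$ exactly when $f(a_i) - f(a_{(i+1) \bmod k}) = s_i$ in $\mathbb{F}_2$, so if every cycle prisoner were simultaneously wrong, we would have $f(a_i) - f(a_{(i+1) \bmod k}) = s_i + 1$ for all $i$. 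Summing around the cycle telescopes the left-hand side to $0$, forcing $k + \sum_i s_i \equiv 0 \pmod 2$. I would therefore choose the $s_i$ so that $k + \sum_i s_i \equiv 1 \pmod 2$---concretely $s_i = 0$ for all $i$ when $k$ is odd, or $s_0 = 1$ with $s_i = 0$ otherwise when $k$ is even---to obtain a contradiction and conclude that some cycle prisoner always guesses correctly.

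For (b)$\Rightarrow$(a), I would argue the contrapositive: assuming $V$ contains no directed cycle, I would show that for any predictor $P$ there is a coloring defeating every prisoner. The proof is by induction on $|A|$. The structural input is the standard fact that every non-empty finite DAG has a source: some $a^\ast \in A$ with no incoming $V$-edge, so no prisoner can see $a^\ast$'s hat. Hence every strategy $\strategy{b}$ with $b \neq a^\ast$ depends only on colors in $A \setminus \{a^\ast\}$, and these strategies together form a predictor on the sub-game with visibility graph $V \cap (A \setminus \{a^\ast\})^2$, which is still a DAG. By the inductive hypothesis there is a coloring $f'$ on $A \setminus \{a^\ast\}$ making every prisoner there wrong; the guess $a^\ast$ would issue under any extension $f \supseteq f'$ is determined by $f \restric V(a^\ast) = f' \restric V(a^\ast)$, so I can extend $f'$ by picking $f(a^\ast) \in K$ to differ from this guess, which is possible because $|K| = 2$.

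The main obstacle is the (a)$\Rightarrow$(b) direction: one must find strategies whose joint failure would violate a global invariant, and select the bits $s_i$ so that the invariant really is violated uniformly for every cycle length $k \geq 2$. The hypothesis $|K| = 2$ enters precisely here, since it supplies the $\mathbb{F}_2$-structure in which the telescoping identity collapses to $0$ and the parity case-split on $k$ closes the argument. The (b)$\Rightarrow$(a) direction is conceptually routine once the source-removal idea is in place, and it does not use $|K| = 2$ in an essential way beyond needing at least two colors to flip $a^\ast$'s hat away from its forced guess.
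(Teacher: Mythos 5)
Your proposal is correct and follows essentially the same route as the paper: a telescoping parity argument around a directed cycle for (a)$\Rightarrow$(b) (the paper fixes the shifts as $s_0=\overline{0}$ and $s_i=\overline{1}$ otherwise, which works uniformly without your case split on the parity of $k$), and for (b)$\Rightarrow$(a) a contrapositive that exploits acyclicity to fix hat colors along a topological order so that everyone errs. The only cosmetic difference is that the paper builds the defeating coloring in one pass over a level stratification $B_0,B_1,\dots$ of the DAG, whereas you induct on $|A|$ by deleting one unseen (source) prisoner at a time; both are sound.
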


\begin{proof}
  Since $|K|=2$,
  identify $K$ with $\{\overline{0},\overline{1}\}=\mathbb{Z}/2\mathbb{Z}$.
  \begin{itemize}
    \item[] \underline{(a)$\Rightarrow$(b)}
      Let $a_0 \CanSee{V} a_1 \CanSee{V} \dots \CanSee{V} a_{n-1} \CanSee{V} a_n \CanSee{V} a_0$ be a cycle in $V$.
      Then we have $\{a_0, \dots, a_n\} \subseteq A$.

      Define strategies for these $n$ prisoners as follows:
      For each $f_i\in\MapSet{V(a_i)}{K}(0\leq i\leq n)$, let
      \[
        \strategy{a_0}(f_0) = f_0(a_1), \quad
        \strategy{a_i}(f_i) 
        = \overline{1} - f_i(a_{i+1})\ (1 \le i < n), \quad
        \strategy{a_n}(f_n) 
        = \overline{1} - f_n(a_0).
      \]
      Then for any coloring $f\in \MapSet{A}{K}$,
      at least one of $\{a_1,\dots,a_n\}$ 
      guesses correctly.
      In fact, 
      if everyone guessed incorrectly, 
      we would derive a contradiction 
      by summing all declarations 
      and comparing with the hat colors,
      an argument similar to that in \cite{Hat001}.
      
    \item[] \underline{(b)$\Rightarrow$(a)}

      We prove the contrapositive.  
      It suffices to show that, 
      for any predictor $P$, there exists a coloring in which all prisoners guess incorrectly.  
      Then $P \notin \PS{\mathcal{G}}{c}{\geq}{1}$, 
      and hence $\PS{\mathcal{G}}{c}{\geq}{1} = \emptyset$.  
      Let $P$ be any predictor for the game $\mathcal{G}$.

      Define $B_0=\{a\in A \mid V(a)=\emptyset\}$ and, for $m>0$,
      \[
        B_m=\{a\in A \mid B_{m-1}\subseteq V(a)\}\setminus\bigcup_{i<m}B_i.
      \]
      Since $V$ is acyclic, $B_0\neq\emptyset$, and there exists $n\in\mathbb{N}$ such that
      \[
        A=\bigcup_{0\le i\le n}B_i,\quad \text{and}\quad B_i\neq\emptyset\text{ for all }i.
      \]

      Each prisoner in $B_0$ sees no hat, 
      so $\strategy{a}$ is constant function for every $a\in B_0$.  
      For any $f\in\MapSet{A}{K}$, define
      \[
        f_0=\{\op{a}{\overline{1}-P(f)(a)} \mid a\in B_0 \}.
      \]
      In any extension of $f_0$, all prisoners in $B_0$ guess incorrectly.  
      Inductively, for $m>0$, define
      \[
        f_m=f_{m-1} \;\cup\; \{\op{a}{\overline{1}-P(f_{m-1})(a)} \mid a\in B_m\}.
      \]
      Then, for every $m\le n$, all prisoners in $\bigcup_{i\le m}B_i$ guess incorrectly under $P$.  
      Thus the final coloring $f_n$ makes every prisoner in $A$ guess incorrectly, as desired.  
      \qedhere

  \end{itemize}
\end{proof}

Finally,
Proposition 3 of \cite{Hat015}
shows that 
if $A$ is finite and $K$ is infinite,
no predictor can guarantee at least one prisoner 
guesses correctly:
\begin{theorem}
  \label{thm:FIVanyS}
  For $\HatGame{A}{K}{V}{1}$ with finite $A$ and infinite $K$,
  no predictor achieves 
  at least one correct guess on every coloring.
  In other words,
  $\PS{\mathcal{G}}{c}{\geq}{1}=\emptyset$.
\end{theorem}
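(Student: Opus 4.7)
The plan is to reduce this to the finite-color case already handled in Corollary \ref{cor:A<K-notMP}. Since $A$ is finite and $K$ is infinite, we certainly have $|A| < |K|$, so in fact the statement is subsumed by that corollary; the point of writing a separate proof is to exhibit the reduction concretely.

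Concretely, I would suppose for contradiction that $P \in \PS{\mathcal{G}}{c}{\geq}{1}$, set $n = |A| + 1$, and pick any $n$-element subset $K_n \subseteq K$. Fix an arbitrary $k \in K_n$, let $\mathcal{G}_n = \HatGame{A}{K_n}{V}{1}$, and define a predictor $P_n$ for $\mathcal{G}_n$ by
\[
  P_n(f)(a) =
    \begin{cases}
      P(f)(a) & \text{if } P(f)(a) \in K_n, \\
      k & \text{otherwise,}
    \end{cases}
\]
for each $f \in \MapSet{A}{K_n}$ (here $f$ is read as an element of $\MapSet{A}{K}$ in order to feed it to $P$). The key observation is that whenever $f \in \MapSet{A}{K_n}$ and prisoner $a$ is correct under $P$, then $P(f)(a) = f(a) \in K_n$, so $P_n(f)(a) = P(f)(a) = f(a)$ as well. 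Thus $P_n$ inherits the property of guaranteeing at least one correct guess on every coloring of $\mathcal{G}_n$.

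But now Lemma \ref{lem:average} applied to $\mathcal{G}_n$ forces the average number of correct guesses per coloring under $P_n$ to be $|A|/|K_n| = (n-1)/n < 1$, so some coloring in $\MapSet{A}{K_n}$ must make every prisoner wrong under $P_n$, contradicting the preceding paragraph. Hence no such $P$ exists.

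There is really no obstacle here; the whole argument is a straightforward truncation of $K$ to a finite subset of size $|A|+1$ followed by an appeal to the averaging lemma, and it essentially repeats the infinite-$K$ half of the proof of Corollary \ref{cor:A<K-notMP}. The only delicate point is making sure the derived predictor $P_n$ really is a predictor in the sense of Section \ref{section:Formalization} (its guesses lie in $K_n$ and are determined by the views of the prisoners), which follows because $P_n$ is a deterministic postprocessing of $P$ coloring by coloring.
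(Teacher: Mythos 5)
Your proposal is correct and is essentially the paper's own argument: the paper proves Theorem \ref{thm:FIVanyS} by citing Corollary \ref{cor:A<K-notMP} (since finite $|A|$ and infinite $K$ give $|A|<|K|$), and your write-up simply unpacks the infinite-$K$ branch of that corollary's proof --- truncating to an $(|A|+1)$-element subset $K_n$, post-processing $P$ into a predictor $P_n$ for $\HatGame{A}{K_n}{V}{1}$, and invoking Lemma \ref{lem:average}. All the steps, including the check that $P_n$ is a genuine predictor, are sound.
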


\begin{proof}
  This follows immediately from Corollary \ref{cor:A<K-notMP}.
\end{proof}

Hence in simultaneous puzzles with a finite number of prisoners,
whether there is a predictor in $\PS{\mathcal{G}}{c}{\geq}{1}$ 
depends on the number of colors:
\begin{itemize}
  \item 2 colors: a predictor exists 
  under certain visibility conditions (cyclic graph),
  \item 3 or more finite colors: a predictor exists 
  if and only if the visibility graph is complete and $|A|=|K|$,
  \item infinitely many colors: no predictor can guarantee 
  at least one correct guess in every coloring.
\end{itemize}
In the infinite prisoner case,
it is possible to guarantee 
not just one but infinitely many correct guesses.
In particular,
under the Axiom of Choice,
there is a predictor with only finitely many prisoners 
guess incorrectly,
known as a ``finite error''predictor.
Let us introduce one more notion.

\begin{definition}[]
  For a game $\mathcal{G}=\HatGame{A}{K}{V}{1}$,
  a predictor $P$ is called a finite error predictor 
  if every coloring has only finitely many prisoners 
  who guess incorrectly,
  that is, $P(f)=^{*}f$ for all colorings $f$.
  Here $=^{*}$ means they differ only on finitely many coordinates.
  Denote the set of such predictors by $\FEPS{\mathcal{G}}$.
\end{definition}

According to \cite{Hat004},
the existence of a finite error predictor 
for an infinite number of prisoners 
(with complete visibility) 
was discovered in 2004 by two graduate students.
It is often cited
as the starting point of infinite hat puzzle research:

\begin{theorem}[Gabay--O'Connor]
  \label{thm:FEP}
  Consider the game $\mathcal{G}=\HatGame{A}{K}{V}{1}$
  where $A$ is infinite and $V$ is complete.
  Under the Axiom of Choice,
  there is a finite error predictor.
  That is,
  $\FEPS{\mathcal{G}}\neq\emptyset$.
\end{theorem}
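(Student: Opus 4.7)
The plan is to use the Axiom of Choice to select, from each $=^{*}$-equivalence class of colorings, a single representative, and then have each prisoner $a$ guess what the chosen representative assigns to $a$. Because the equivalence class of the true coloring can be determined from the view of any single prisoner, every prisoner will compute the same representative, and that representative will agree with the true coloring on all but finitely many coordinates.

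First I would formalize the equivalence relation. Define $\sim$ on $\MapSet{A}{K}$ by $f\sim g$ iff $f=^{*}g$, that is, the set $\{a\in A \mid f(a)\neq g(a)\}$ is finite. This is clearly reflexive, symmetric, and transitive, so it partitions $\MapSet{A}{K}$ into equivalence classes. Using the Axiom of Choice, pick a representative $r_{C}\in C$ from each equivalence class $C$, and write $r_{f}:=r_{[f]_{\sim}}$ for the representative of $f$'s class.

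Next I would define the strategies. Since $V$ is complete, $V(a)=A\setminus\{a\}$ and the view $v_{a}^{f}=f\restric(A\setminus\{a\})$ determines $f$ up to the single value $f(a)$. The key observation is that for any two extensions $f,g$ of $v_{a}^{f}$ to $A$, we have $f =^{*} g$ (they differ only possibly at $a$), so $f\sim g$ and hence $[f]_{\sim}$ depends only on $v_{a}^{f}$. Therefore prisoner $a$ can, from its view, compute the class $[f]_{\sim}$ and hence the representative $r_{f}$; define
\[ \strategy{a}(v_{a}^{f}) := r_{f}(a). \]
Let $P$ be the resulting predictor, so $P(f)=r_{f}$.

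Finally I would verify the finite error property. Since $r_{f}\in[f]_{\sim}$, by definition $r_{f}=^{*}f$, meaning $\{a\in A \mid P(f)(a)\neq f(a)\}$ is finite. Hence $P\in\FEPS{\mathcal{G}}$, as desired. The only genuinely nontrivial step is the use of the Axiom of Choice to pick representatives from the (typically $2^{|A|}$-many) equivalence classes; everything else is bookkeeping. The conceptual subtlety worth emphasizing is that the prisoners do \emph{not} need to know which representative was chosen for classes other than the true one, nor which finitely many coordinates are the ``errors''; each prisoner independently reconstructs the same representative from their view, and the finiteness of the disagreement set is automatic from the definition of $\sim$.
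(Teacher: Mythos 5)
Your proposal is correct and follows essentially the same route as the paper: choose a representative of each $=^{*}$-equivalence class via the Axiom of Choice, observe that completeness of $V$ lets each prisoner determine the class (the paper formalizes this by passing to $[\cv{f}{a}{k}]$ for a fixed $k$, which is exactly your observation that the class depends only on $v_a^f$), and have each prisoner guess the representative's value at their own coordinate. No gaps.
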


\begin{proof}
  Consider the equivalence relation $=^{*}$ on $\MapSet{A}{K}$ 
  given by $f=^{*}g$ if and only if they differ only on finitely many prisoners.
  Let $[f]$ be the equivalence class of $f$ and choose a selector $\varphi$ on these classes using the Axiom of Choice,
  that is, $\varphi([f])\in [f]$. Given $f=^{*}\varphi([f])$,
  define
  \[    \strategy{a}(v_a^f)=\varphi\bigl([\cv{f}{a}{k}]\bigr)(a)   \]
  for some fixed $k\in K$. Then let $\sigma_a(f)=\strategy{a}(v_a^f)$.
  If $V$ is complete,
  $a$ sees all hats except its own, 
  so $f\restric(A\setminus\{a\})$ is the same as $v_a^f$.
  One checks that for any $f$, $P(f)$ (generated by $\sigma_a$) 
  differs from $f$ only on finitely many coordinates,
  thus achieving finite error.
\end{proof}

In non-simultaneous puzzles with infinitely many prisoners,
a similar result holds 
under the Axiom of Choice,
namely the existence of a predictor with at most one error 
if conditions allow for a ``hint''.
We will see this in Corollary \ref{cor:IAVA}.

\section{Non-simultaneous puzzle with finite prisoners}
\label{section:FinitePrisoner}

Throughout this section, assume $A$ is finite.

For a game $\HatGame{A}{K}{V}{I}$ with $IN\geq 2$,
define the following three conditions:

\begin{itemize}
  \item[(S1)]
    Exactly one prisoner declares in the initial inning,
    that is, there is a unique $s\in A$ 
    with $H(s)=\emptyset$, or $|\SubPrSet{1}|=1$.
  \item[(S2)]
    Each prisoner either sees or hears 
    the hat (or declaration) 
    of every other prisoner,
    that is, 
    $\forall a\in A,\ V(a)\cup H(a)=A\setminus\{a\}$.
  \item[(S3)]
    No prisoner both sees and hears the same prisoner,
    that is, 
    $\forall a\in A,\ V(a)\cap H(a)=\emptyset$.
\end{itemize}
Many well-known hat puzzles satisfy some of these conditions.
Puzzle \ref{puz:52VcompleteA} satisfies (S1) and (S2).
Puzzle \ref{puz:52VstairsA} 
satisfies all three (S1), (S2), and (S3).
In the (S1) scenario, 
the first speaker's role 
is generally to provide a ``hint'',
so that all others can guess correctly 
and yield at most one total error.
This usually requires a group-theoretic structure 
on the color set $K$,
that is, 
that $K$ be an abelian group,
so that the first prisoner can encode the other hats 
as a single ``sum''.
In fact,
in the simultaneous puzzle approach 
for finitely many prisoners,
the difference in cardinalities $|A|$ versus $|K|$ 
is crucial,
whereas in the non-simultaneous approach 
with a single ``hint giver'', 
the group structure 
is often more important than the size of $K$.

We begin with a general theorem 
for finite non-simultaneous puzzles 
(which also solves Puzzle \ref{puz:52VcompleteA} 
and Puzzle \ref{puz:52VstairsA}).
The proof shows how the first prisoner,
using the group operation,
can announce the sum of all the other prisoners'
hat values,
enabling them to guess their own color via subtraction.

\begin{lemma}[]
  \label{lem:FGVA}
  Let $(K,+)$ be an abelian group.
  Suppose $IN\geq 2$ and the game $\mathcal{G}=\HatGame{A}{K}{V}{I}$ 
  satisfies (S1) and (S2).
  Then there is a predictor in $\PS{\mathcal{G}}{e}{\leq}{1}$;
  in other words,
  one can guarantee at most one prisoner guesses incorrectly.
\end{lemma}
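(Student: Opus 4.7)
The plan is to designate the unique first speaker $s$ (guaranteed by (S1)) as a ``hint giver'' who encodes the sum, in $K$, of all other hats, and then to let every subsequent prisoner $a$ recover $f(a)$ by subtracting out what they know.

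First I would observe that since $H(s)=\emptyset$, condition (S2) forces $V(s)=A\setminus\{s\}$, so $s$ sees every other hat. I define
\[ \strategy{s}(v_s^f) = \sum_{b \in V(s)} f(b),\]
so that, viewing $\sigma_s(f)$ as the announced value, we have $\sigma_s(f) = \sum_{b \neq s} f(b)$, which may be wrong as a guess for $f(s)$ but is useful as a broadcast ``hint''. Note that $s \in H(a)$ for every $a \neq s$, so each later prisoner hears this hint.

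Next, for each $a \neq s$, I would partition $A\setminus\{a,s\}$ into $V'(a):=V(a)\setminus\{s\}$ and $H'(a):=(H(a)\setminus\{s\})\setminus V(a)$. This is the main place where care is needed: because (S3) is not assumed, a prisoner $b$ may be both seen and heard by $a$, and double-counting would be fatal; preferring sight avoids this. Condition (S2) then guarantees $V'(a)\cup H'(a)=A\setminus\{a,s\}$ with $V'(a)\cap H'(a)=\emptyset$. I then define
\[ \sigma_a(f) \;=\; \sigma_s(f) \;-\; \sum_{b\in V'(a)} f(b) \;-\; \sum_{b\in H'(a)} \sigma_b(f), \]
which is legitimate because $V'(a)\subseteq V(a)$ (values read from $v_a^f$) and $H'(a)\subseteq H(a)$ (values read from $h_a^f$).

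Finally I would prove by induction on $I(a)$ that $\sigma_a(f)=f(a)$ for every $a\neq s$. The base case is any $a$ with $H(a)=\{s\}$ (forced by (S1) when $a\in \SubPrSet{2}$ and, more generally, handled by the inductive hypothesis): substituting $\sigma_s(f)=\sum_{b\neq s}f(b)$ and, by hypothesis, $\sigma_b(f)=f(b)$ for $b\in H'(a)\subseteq H(a)\setminus\{s\}$, the two subtracted sums amount to $\sum_{b\in A\setminus\{a,s\}}f(b)$, and the telescoping leaves exactly $f(a)$. Thus every prisoner except possibly $s$ is correct, so the induced predictor $P$ lies in $\PS{\mathcal{G}}{e}{\leq}{1}$. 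The only genuine subtlety, as noted, is the partition step in the absence of (S3); the rest is the group-theoretic telescoping familiar from Puzzle \ref{puz:52VcompleteA} and Puzzle \ref{puz:52VstairsA}.
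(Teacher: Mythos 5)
Your proposal is correct and follows essentially the same route as the paper: the unique first speaker $s$ (who by (S1) and (S2) sees all other hats) announces $\sum_{b\neq s}f(b)$, and each later prisoner recovers $f(a)$ by subtraction, verified by induction on the inning. The only difference is a cosmetic one in how the overlap $V(a)\cap H(a)$ is handled — you prefer seen hats, while the paper subtracts heard declarations over $H(a)\setminus\{s\}$ and seen hats only over $V(a)\setminus H(a)$ — and both choices telescope correctly.
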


\begin{proof}
  Since (S1) holds, write $\SubPrSet{1}=\{s\}$.
  For a coloring $f_s\in \MapSet{V(s)}{k}$, define
  \[    \strategy{s}(f_s)=\sum_{b\in \dom(f_s)} f_s(b).  \]
  For $a\in \SubPrSet{m+1}$ with $m\ge 2$,
  and for two colorings $f_a\in \MapSet{V(a)}{K},h_a\in \MapSet{H(a)}{K}$,
  define
  \[ 
    \strategy{a}(h_a,f_a)
  	\;=\; h_a(s) \;-\; \sum_{b\in (\dom(h_a)\setminus \{s\})}h_a(b)
		\;-\; \sum_{b\in \dom(f_a)\setminus \dom(h_a)} f_a(b). 
  \]
  We will show that,
  under the resulting predictor $P$, in any coloring $f\in \MapSet{A}{K}$,
  prisoner $s$ may err,
  but all other prisoners guess correctly.

  Fix any $f\in \MapSet{A}{K}$.
  We show that every $a\in \SubPrSet{2-}$ guesses correctly.
  We use induction on $n=2,\dots,IN$.
  Suppose all prisoners in $\SubPrSet{2},\dots,\SubPrSet{n-1}$ 
  guess correctly.
  Take $a\in \SubPrSet{n}$.
  Then
  \begin{align}
    P(f)(a)=\sigma_a(f)=\strategy{a}(h_a^f,v_a^f)
    &= h_a^f(s) \;-\; \sum_{b\in \dom(h_a^f)\setminus\{s\}}h_a^f(b)
		\;-\; \sum_{b\in \dom(v_a^f)\setminus \dom(h_a^f)}v_a^f(b). 
		\tag{$\dagger$}
  \end{align}
  Note 
  $h_a^f(s)=\sigma_s(f)=\sum_{b\in V(s)}f(b)$ 
  and since (S2) implies $V(s)=A\setminus\{s\}$, 
  \[    
    \sigma_s(f)=\sum_{b\in A\setminus\{s\}}f(b)
  	=\sum_{b\in \SubPrSet{2-}} f(b). \tag{$\dagger1$}  
  \]
  By induction,
  all prisoners in $\SubPrSet{2}\cup\dots\cup\SubPrSet{n-1}$ guess correctly,
  \[    
    \sum_{b\in \dom(h_a^f)\setminus\{s\}}h_a^f(b)
  	=\sum_{b\in \SubPrSet{2- (n-1)}}f(b). \tag{$\dagger2$}  
  \]
  Also from (S2),
  we get 
  \[    
    \dom(v_a^f)\setminus \dom(h_a^f)
  	=\SubPrSet{n-}\setminus\{a\}.    \tag{$\dagger3$}  
  \]
  Thus plugging $(\dagger1)$–$(\dagger3)$ into $(\dagger)$,
  we obtain
  \begin{align*}
    P(f)(a)
    	&=\Bigl(\sum_{b\in \SubPrSet{2-}}f(b)\Bigr)
			-\Bigl(\sum_{b\in \SubPrSet{2-(n-1)}}f(b)\Bigr)
			-\Bigl(\sum_{b\in \SubPrSet{n-}
				\setminus\{a\}}f(b)\Bigr)
    =f(a).
  \end{align*}
  hence 
  prisoner $a$ guesses correctly.
  Thus $P\in \PS{\mathcal{G}}{e}{\leq}{1}$.
\end{proof}

Looking at the proof of Lemma \ref{lem:FGVA}, 
it may appear that, 
in order to construct a predictor with at most one incorrect guess, 
there must be exactly one prisoner who speaks first — that is, 
one who plays the role of giving a hint and may guess incorrectly.  
However, 
when there are only two colors, 
it can be shown that there may be two prisoners who speak first instead.  
In that case, 
we need to modify the visibility of the prisoners slightly.

\begin{theorem}[]
  \label{thm:F2VA}
  Suppose $|K|=2$, $IN\geq 2$, 
  and the game $\mathcal{G}=\HatGame{A}{K}{V}{I}$ satisfies:
  \begin{itemize}
    \item[(S4)] Every prisoner in $\SubPrSet{1}$ 
    	sees the hats of all other prisoners 
		(so $\forall a\in \SubPrSet{1},\ V(a)=A\setminus\{a\}$).
    \item[(S5)] 
      Exactly two prisoners declare in the initial inning,
      that is, $|\SubPrSet{1}|=2$.
    \item[(S6)] Every prisoner who declares 
    	later sees the hats of all prisoners 
		in $\SubPrSet{1}$ and of those 
		who declare in the same inning or later
		($\forall n\in \{ 2,\dots,IN \}\forall a\in \SubPrSet{n},\ 
			\SubPrSet{1}\cup\SubPrSet{n-}\subseteq V(a)$).
  \end{itemize}
  Then there is a predictor in $\PS{\mathcal{G}}{e}{\leq}{1}$.
  In other words, 
  the prisoners can guarantee at most one guess is incorrect.
\end{theorem}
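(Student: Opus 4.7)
The plan is to blend the classic two-prisoner, two-color protocol for $\SubPrSet{1}$ with the hint-giver construction from Lemma \ref{lem:FGVA}, again viewing $K=\{\overline{0},\overline{1}\}$ as $\mathbb{Z}/2\mathbb{Z}$. Writing $\SubPrSet{1}=\{s_0,s_1\}$, both $s_0$ and $s_1$ see everyone else by (S4), so I would have $s_0$ guess $\sigma_{s_0}(f)=\sum_{b\ne s_0}f(b)$ and $s_1$ guess $\sigma_{s_1}(f)=\overline{1}+\sum_{b\ne s_1}f(b)$. A short computation in $\mathbb{Z}/2\mathbb{Z}$ — exactly the argument behind Puzzle \ref{puz:22VcompleteS} — gives $\sigma_{s_0}(f)+\sigma_{s_1}(f)=\overline{1}+f(s_0)+f(s_1)$, so exactly one of $s_0,s_1$ is right in every coloring.

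Crucially, their joint broadcast still carries the global hint $T:=\sum_{b\in\SubPrSet{2-}}f(b)$: one has $\sigma_{s_0}(f)=f(s_1)+T$, and later prisoners see $f(s_1)$ by (S6), so they can recover $T=\sigma_{s_0}(f)+f(s_1)$ regardless of whether $s_0$ was personally correct. From here the proof mimics Lemma \ref{lem:FGVA}: for $a\in\SubPrSet{n}$ with $n\ge 2$, define $\strategy{a}$ so that $P(f)(a)$ equals $T$ plus the hats of $\SubPrSet{2-(n-1)}$ (read off from $h_a$) plus the hats of $\SubPrSet{n-}\setminus\{a\}$ (read off from $v_a$), all summed in $\mathbb{Z}/2\mathbb{Z}$. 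An induction on $n$, using the inductive hypothesis that everyone in $\SubPrSet{2}\cup\dots\cup\SubPrSet{n-1}$ already guessed correctly, then telescopes to $P(f)(a)=f(a)$.

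The main place to be careful is that (S3) is no longer assumed here, so a prisoner in $\SubPrSet{n}$ may both see and hear some prisoner in $\SubPrSet{2-(n-1)}$. To avoid double counting I would write $\strategy{a}$ as a sum that splits $A\setminus\{a,s_0,s_1\}$ into $H(a)\setminus\{s_0,s_1\}$ (values taken from $h_a$) and $V(a)\setminus H(a)$ (values taken from $v_a$); this partitioning relies on the identity $V(a)\setminus H(a)=\SubPrSet{n-}\setminus\{a\}$. The $\supseteq$ direction is immediate from (S6), while $\subseteq$ follows because $H(a)\cup\SubPrSet{n-}=A$ and $V(a)\subseteq A\setminus\{a\}$. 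Once this identity is in place, the cancellation in the $\mathbb{Z}/2\mathbb{Z}$ sum during the inductive step is automatic; the real obstacle is simply the bookkeeping needed to show that weakening (S1) to (S5) and dropping (S3) does not break the decoding of the hint.
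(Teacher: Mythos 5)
Your proposal is correct and follows essentially the same route as the paper: the two members of $\SubPrSet{1}$ announce complementary parities of all the hats they see (so exactly one of them is right in every coloring), and each later prisoner uses (S6) together with the heard declarations to recover $\sum_{b\in\SubPrSet{2-}}f(b)$ and then cancels the remaining hats exactly as in Lemma \ref{lem:FGVA}. The only difference is cosmetic and lies in the decoding step: you extract the hint directly as $T=\sigma_{s_0}(f)+f(s_1)$, whereas the paper first determines which of $s_0,s_1$ guessed correctly in order to learn $\sum_{a\in A}f(a)$ --- equivalent information, since by (S6) the later prisoners see both first-inning hats; your explicit partition of $A\setminus\{a,s_0,s_1\}$ into $H(a)\setminus\SubPrSet{1}$ and $V(a)\setminus H(a)$ also handles the absence of (S3) more carefully than the paper's displayed formula does.
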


\begin{proof}
  Identify $K$ with $\mathbb{Z}/2\mathbb{Z}=\{\overline{0},\overline{1}\}$.  
  By (S5), let $\SubPrSet{1}=\{s_0,s_1\}$.  
  Define the strategies for the first two speakers as follows,
  for a coloring $f_i \in \MapSet{V(s_i)}{K}$,
   \[
    \strategy{s_i}(f_i)
    = (\ \sum_{a\in \dom(f_i)\setminus\{s_{1-i}\}} f_i(a)\ )
    + \overline{i} - f_i(s_{1-i})
    \quad (i=0,1).
  \]
  For each $n\ge2$ and $a\in \SubPrSet{n}$, 
  and for two colorings $f_a\in \MapSet{V(a)}{K},h_a\in \MapSet{H(a)}{K}$,
  define
  \[
    \strategy{a}(h_a,f_a)
    = \sum_{s_i\in \SubPrSet{1}\land h_a(s_i)=f_a(s_i)}\overline{i}
    - \sum_{b\in \dom(h_a)\setminus\SubPrSet{1}}h_a(b)
    - \sum_{b\in \dom(f_a)\setminus\SubPrSet{1}}f_a(b).
  \]

  Consider any coloring $f\in\MapSet{A}{K}$.  
  By (S4), both $s_0$ and $s_1$ see all other hats, and their strategies are designed so that exactly one of them guesses correctly.  
  Indeed,
  \[
    P(f)(s_0)=(\ \sum_{a\in\SubPrSet{2-}}f(a)\ ) + \overline{0} - f(s_1),
    \quad
    P(f)(s_1)=(\ \sum_{a\in\SubPrSet{2-}}f(a)\ ) + \overline{1} - f(s_0).
  \]
  If $\sum_{a\in A}f(a)=\overline{0}$, then $s_0$ guesses correctly and $s_1$ guesses incorrectly, while if $\sum_{a\in A}f(a)=\overline{1}$, the roles are reversed.  
  Hence, exactly one prisoner in $\SubPrSet{1}$ is correct.

  Moreover, when $s_i$ is correct, we have $\sum_{a\in A}f(a)=\overline{i}$.  
  Using this fact, the strategy for each subsequent prisoner $a\in\SubPrSet{n}$ ($n\ge2$) can be shown, by induction, to yield $P(f)(a)=f(a)$.  
  Indeed, since each such $a$ hears which of $s_0$ or $s_1$ was correct and sees all previous prisoners in $\SubPrSet{2-(n-1)}$ (by (S6)), substituting into
  \[
    P(f)(a)
    = \sum_{s_i\in \SubPrSet{1}\land h_a^f(s_i)=f(s_i)}\overline{i}
    - \sum_{b\in \dom(h_a^f)\setminus\SubPrSet{1}}h_a^f(b)
    - \sum_{b\in \dom(v_a^f)\setminus\SubPrSet{1}}v_a^f(b),
  \]
  one obtains $P(f)(a)=f(a)$ directly.

  Therefore, for any coloring $f$, exactly one prisoner in $\SubPrSet{1}$ and all prisoners in $\SubPrSet{2-}$ guess correctly.  
  That is, the predictor $P$ produces at most one error in every play of the game, 
  and hence $\PS{\mathcal{G}_1}{e}{\leq}{1}\neq\emptyset$.
\end{proof}

For the case of three or more finite colors,
the same approach as in Lemma \ref{lem:FGVA} works if (S1) and (S2) hold,
because one can form 
an abelian group operation in $K$ 
(for example, $\mathbb{Z}/|K|\mathbb{Z}$ if $K$ is finite).
Thus:

\begin{theorem}[]
  \label{thm:FFVA}
  Let $K$ be a finite set with $3\le|K|<\omega$ 
  and consider a game $\HatGame{A}{K}{V}{I}$ with $IN\ge2$.
  If (S1) and (S2) hold,
  then $\PS{\mathcal{G}}{e}{\leq}{1}\neq\emptyset$.
\end{theorem}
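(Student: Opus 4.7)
The plan is to reduce this theorem directly to Lemma \ref{lem:FGVA}. That lemma already establishes the existence of a predictor in $\PS{\mathcal{G}}{e}{\leq}{1}$ whenever the color set carries an abelian group structure and hypotheses (S1), (S2) are in force; so all that remains is to equip the bare finite set $K$ with such a structure.

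To do this, I would fix $n = |K|$ and choose any bijection $\varphi \colon K \to \mathbb{Z}/n\mathbb{Z}$. Using $\varphi$ to transport the cyclic group structure, define addition on $K$ by $k_1 + k_2 := \varphi^{-1}(\varphi(k_1) + \varphi(k_2))$, making $(K,+)$ into a finite abelian group isomorphic to $\mathbb{Z}/n\mathbb{Z}$. Since the hat game $\HatGame{A}{K}{V}{I}$ depends only on $K$ as a set (colorings are functions from $A$ to $K$, visibility and inning data are unchanged), imposing this group structure does not alter the game itself; it merely gives the prisoners a common algebraic scaffold with which to define and decode their strategies.

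With this group in hand, every hypothesis of Lemma \ref{lem:FGVA} is satisfied: $IN \ge 2$, (S1) and (S2) hold by assumption, and $(K,+)$ is an abelian group. Applying the lemma yields a predictor $P \in \PS{\mathcal{G}}{e}{\leq}{1}$, which is exactly the conclusion sought.

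There is no real obstacle here, since the work has all been done in Lemma \ref{lem:FGVA}; the only ``content'' of this theorem is the elementary observation that a finite set of any cardinality admits an abelian group structure (for example, the cyclic one). The cardinality hypothesis $3 \le |K| < \omega$ is not used substantively in the reduction — it serves only to separate this result from the $|K|=2$ case treated by Theorem \ref{thm:F2VA}, where the visibility assumptions can be relaxed in a different direction.
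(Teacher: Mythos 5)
Your reduction is correct and is exactly the paper's own argument: the paper justifies this theorem by the remark that one can impose the abelian group structure of $\mathbb{Z}/|K|\mathbb{Z}$ on the finite color set and then invoke Lemma \ref{lem:FGVA}. Your observation that the hypothesis $3\le|K|$ plays no substantive role in the reduction is also consistent with the paper, where that bound only serves to delimit this case from the two-color result of Theorem \ref{thm:F2VA}.
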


In contrast to the simultaneous case,
if $K$ is infinite and $A$ is finite,
the puzzle does allow for a predictor with at most one error!
This differs from 
the simultaneous scenario (Theorem \ref{thm:FIVanyS}),
since the non-simultaneity lets 
a single ``hint''correct 
all but possibly one prisoner.
We do not even need an abelian group structure on $K$ 
nor the full strength of (S2);
a weaker version (S4) plus having a single ``hint''suffices:

\begin{theorem}[]
  \label{thm:FIVA}
  Suppose $K$ is infinite,
  $IN\ge2$,
  and $\mathcal{G}=\HatGame{A}{K}{V}{I}$ satisfies (S1) and (S4).
  Then $\PS{\mathcal{G}}{e}{\leq}{1}\neq\emptyset$.
\end{theorem}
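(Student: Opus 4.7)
The plan is to exploit the infinitude of $K$ against the finiteness of $A$: the unique first speaker $s$ will encode the entire view $f\restric (A\setminus\{s\})$ as a single color of $K$, declare that color as its own guess, and let each of the remaining prisoners decode the declaration to read off its own hat directly. In contrast to Lemma \ref{lem:FGVA}, no group operation on $K$ is required, and in contrast to the use of (S2) there, the later prisoners need not see any hats among themselves; the only visibility that matters is $V(s)=A\setminus\{s\}$, which is supplied by (S1) together with (S4), while the fact that $\SubPrSet{1}=\{s\}$ forces $s\in H(a)$ for every $a\neq s$.

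Concretely, writing $n=|A|$, I would first fix a bijection $\phi\colon \MapSet{A\setminus\{s\}}{K}\to K$, using the cardinal identity $|K|^{n-1}=|K|$. I would then set $\strategy{s}(f_s)=\phi(f_s)$, and for each $a\neq s$, define $\strategy{a}(h_a,f_a)=\phi^{-1}(h_a(s))(a)$; the latter makes sense because $a\in A\setminus\{s\}$ lies in the domain of every map in $\MapSet{A\setminus\{s\}}{K}$. Verification is then immediate: for any coloring $f$ one has $h_a^f(s)=\sigma_s(f)=\phi(f\restric (A\setminus\{s\}))$, hence $\phi^{-1}(h_a^f(s))=f\restric (A\setminus\{s\})$, and evaluating at $a$ yields $f(a)$. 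Thus every prisoner other than $s$ is correct under $f$, giving $P\in\PS{\mathcal{G}}{e}{\leq}{1}$.

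The only step with any content is securing the bijection $\phi$, i.e.\ the cardinal identity $|K|^{n-1}=|K|$ for infinite $K$ and finite $n-1\ge 1$; in the ZFC background assumed throughout this section this is routine cardinal arithmetic, and it plays the same structural role for the argument that the abelian group operation plays in Lemma \ref{lem:FGVA}. Beyond this I do not anticipate any genuine obstacle: the decoding step for each non-initial prisoner is entirely mechanical once $\phi$ is fixed, and $s$ is permitted to err, which accounts for the single allowed mistake.
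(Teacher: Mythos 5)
Your proposal is correct and is essentially identical to the paper's own proof: the paper likewise fixes a bijection between $K$ and $\MapSet{A\setminus\{s\}}{K}$, has $s$ declare the encoded view, and has each later prisoner decode $h_a(s)$ and evaluate at itself. The only difference is the direction in which you orient the bijection, which is immaterial.
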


\begin{proof}
  Because $A$ is finite and $K$ is infinite,
  there is a bijection $\varphi:K \to \MapSet{A\setminus\{s\}}{K}$,
  letting us ``encode''
  any function on $A\setminus\{s\}$ by a single color in $K$.
  Label $\SubPrSet{1}=\{s\}$.
  For a coloring $f_s\in \MapSet{V(s)}{K}$
  Let
  \[    \strategy{s}(f_s)=\varphi^{-1}(f_s).  \]
  For any $a\in \SubPrSet{2-}$,
  and for two colorings $f_a\in \MapSet{V(a)}{K},h_a\in \MapSet{H(a)}{K}$,
  define
  \[    \strategy{a}(h_a,f_a)=\varphi\bigl(h_a(s)\bigr)(a).  \]
  Then for every coloring $f\in \MapSet{A}{K}$,
  prisoner $s$ might fail, but everyone else is correct.
  This yields $\PS{\mathcal{G}}{e}{\leq}{1}\neq\emptyset$.
\end{proof}

In the simultaneous case,
we had theorems where the existence of such a predictor 
was equivalent to certain conditions on $A$ and $V$ 
(for example, Theorem \ref{thm:FFVcompleteS},
\ref{thm:F2VcyclicS}).
For the non-simultaneous scenario,
we often need an extra assumption (S3) 
to obtain the reverse implication ``(b)$\Rightarrow$(a)''.
Specifically,
we prove two results (Theorem \ref{thm:after-thm:FFVA} 
and Theorem \ref{thm:after-thm:FIVA}) 
that rely on (S3).

Before that, 
we present two propositions used in the proofs of Theorems \ref{thm:after-thm:FFVA} and \ref{thm:after-thm:FIVA}.

The first shows that if there is a predictor 
with at most one error in a non-simultaneous game,
then there is such a predictor 
also in the game obtained by restricting to 
the set of prisoners who declare in the initial inning.

\begin{proposition}[]
  \label{prop:FirstGroup}
  Let $\mathcal{G}=\HatGame{A}{K}{V}{I}$ with $IN\ge2$, 
  and assume $\PS{\mathcal{G}}{e}{\leq}{1}\neq\emptyset$.
  Then in the simultaneous game 
  $\mathcal{G}_1=\HatGame{\SubPrSet{1}}{K}{V\restric \SubPrSet{1}}{1}$ 
  (keeping the same set of colors and same visibility relation 
  restricted to $\SubPrSet{1}$),
  there is also a predictor in $\PS{\mathcal{G}_1}{e}{\leq}{1}$.
\end{proposition}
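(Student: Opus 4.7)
The plan is to take any $P \in \PS{\mathcal{G}}{e}{\leq}{1}$ and restrict it to a predictor on the initial inning by ``freezing'' an arbitrary completion of the coloring on the remaining prisoners. Concretely, fix once and for all any auxiliary coloring $g_0 \in \MapSet{A \setminus \SubPrSet{1}}{K}$ (a constant coloring works, so no choice principle is needed). For each $f_1 \in \MapSet{\SubPrSet{1}}{K}$, set $f := f_1 \cup g_0 \in \MapSet{A}{K}$ and define $P_1(f_1)(a) := P(f)(a)$ for $a \in \SubPrSet{1}$.

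First I would verify that $P_1$ arises from legal strategies for $\mathcal{G}_1$, i.e., that $P_1(f_1)(a)$ depends only on $f_1 \restric (V(a) \cap \SubPrSet{1})$ for each $a \in \SubPrSet{1}$. Because $a \in \SubPrSet{1}$ forces $H(a) = \emptyset$, the underlying strategy $\strategy{a}$ of $P$ is a function of $v_a^f = f \restric V(a)$ alone (there is nothing to hear before the first inning). With $g_0$ fixed, the piece $f \restric (V(a) \setminus \SubPrSet{1})$ equals the constant $g_0 \restric (V(a) \setminus \SubPrSet{1})$, so the only part of $v_a^f$ varying with $f_1$ is $f_1 \restric (V(a) \cap \SubPrSet{1})$, which is exactly $a$'s view in $\mathcal{G}_1$ under the restricted visibility graph $V \restric \SubPrSet{1}$.

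Finally I would count errors. Since $a \in \SubPrSet{1}$ gives $f(a) = f_1(a)$, the predictor $P_1$ errs at $a$ on input $f_1$ precisely when $P$ errs at $a$ on input $f = f_1 \cup g_0$. Hence the error set of $P_1$ on $f_1$ is $(A \setminus \Match{f}{P(f)}) \cap \SubPrSet{1}$, whose cardinality is bounded by $|A \setminus \Match{f}{P(f)}| \leq 1$, yielding $P_1 \in \PS{\mathcal{G}_1}{e}{\leq}{1}$. I do not foresee a real obstacle: the only conceptual point is that prisoners in $\SubPrSet{1}$ hear nothing, which is precisely what lets a strategy built for the non-simultaneous game $\mathcal{G}$ be reused unchanged in the simultaneous subgame $\mathcal{G}_1$ once one arbitrary extension outside $\SubPrSet{1}$ is fixed.
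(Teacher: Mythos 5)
Your proposal is correct and is essentially the paper's own argument: the paper also extends each initial-inning coloring by a fixed constant color $k$ on the remaining prisoners, applies $P$, and notes that at most one error on the full coloring gives at most one error on $\SubPrSet{1}$. Your version is slightly more explicit about why the resulting strategies are legal in $\mathcal{G}_1$ (namely that $H(a)=\emptyset$ for $a\in\SubPrSet{1}$, so only the view matters), but the route is the same.
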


\begin{proof}
  If $P$ is a predictor in $\PS{\mathcal{G}}{e}{\leq}{1}$,
  pick a fixed $k\in K$.
  Define $\strategy{a}$ 
  for $a\in\SubPrSet{1}$ in the new game $\mathcal{G}_1$,
  for a coloring $f_a\in \MapSet{V(a)}{K}$ by 
  \[    
    \strategy{a}(f_a) = P\Bigl( f_a\cup\{\op{b}{k} \mid b\in A\setminus V(a)\} Bigr)(a).
  \]
  Denote the combined predictor by $P_1$.
  If $f_k$ is the extension to $A$ 
  (assigning color $k$ to those outside $\dom(f)$),
  at most one prisoner in $A$ may guess incorrectly
  in $f_k$ under $P$,
  so certainly at most one from $\SubPrSet{1}$ may guess incorrectly.
  Hence $P_1\in\PS{\mathcal{G}_1}{e}{\leq}{1}$.
\end{proof}

Next,
we gather several useful statements under (S1) and (S3) 
once we know there is a predictor guaranteeing at most one error.

\begin{proposition}[]
  \label{prop:useful}
  Let $\mathcal{G}=\HatGame{A}{K}{V}{I}$ have $IN\ge2$, and suppose (S1) and (S3) hold,
  with $P\in \PS{\mathcal{G}}{e}{\leq}{1}$.
  Fix $k,\dash{k}\in K$, $f\in \MapSet{A}{K}$.
  The following properties hold:
  \begin{enumerate}
    \item
      \label{sub:1}
      (S4) must hold as well. In other words,
      the unique first speaker $s$ sees all other hats: 
      $V(s)=A\setminus\{s\}$.
    \item
      \label{sub:2}
      If a prisoner guesses incorrectly 
      under $P$, it must be $s$.
      In other words,
      whenever $P(f)(a)\neq f(a)$, it follows $a=s$.
    \item
      \label{sub:3}
      Changing a single hat color in $f$ 
      (other than $s$'s hat) changes $s$'s guess.
      Formally,
      for $a\in \SubPrSet{2-}$, 
      \[        P(f)(s)\neq P\bigl(f[a|\nee{f(a)}]\bigr)(s).   \]
    \item
      \label{sub:4}
      Furthermore,
      if $|K|<\omega$,
      changing $s$'s guess 
      forces every other prisoner's guess to change as well.
      Formally,
      if $|K|<\omega$ and $a\in \SubPrSet{2-}$,
      then
      \[        
        \strategy{a}\bigl(h_a^f,v_a^f\bigr)
      	\neq \strategy{a}\bigl(h_a^f[s|\nee{h_a^f(s)}],\,v_a^f\bigr).    
      \] \qedhere
  \end{enumerate}
\end{proposition}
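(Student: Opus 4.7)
The plan is to prove the four parts in the order (\ref{sub:2}), (\ref{sub:1}), (\ref{sub:3}), (\ref{sub:4}), since each later part rests on the earlier ones. The key preliminary is that (S1) and (S3) together force $s \notin V(a)$ for every $a \neq s$: by (S1), $s \in H(a)$ for any $a \in \SubPrSet{2-}$, and then (S3) gives $s \notin V(a)$. Combined with the trivial observation that $\sigma_s$ depends on $v_s^f = f \restric V(s)$ and hence not on $f(s)$ itself, this says that changing $f(s)$ to a different color alters no other prisoner's view, heard history, or guess (by straightforward induction on the inning). For part (\ref{sub:2}), I then argue by contradiction: if some $a \neq s$ errs under $f$, pick $k \in K$ with $k \neq P(f)(s)$ and let $f' = f[s|k]$; by the above observation, $\sigma_c(f') = \sigma_c(f)$ for every $c \neq s$, so $a$ still errs in $f'$, and additionally $\sigma_s(f') = \sigma_s(f) \neq k = f'(s)$ makes $s$ err too, contradicting $P \in \PS{\mathcal{G}}{e}{\leq}{1}$.

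Part (\ref{sub:1}) follows from part (\ref{sub:2}): if instead some $b \neq s$ had $b \notin V(s)$, set $f' = f[b|\nee{f(b)}]$. Then $v_s^{f'} = v_s^f$ (since $b \notin V(s)$), giving $\sigma_s(f) = \sigma_s(f')$; and for every other $c \in H(b)$, part (\ref{sub:2}) yields $\sigma_c(f) = f(c) = f'(c) = \sigma_c(f')$. Thus $h_b^f = h_b^{f'}$ and $v_b^f = v_b^{f'}$, forcing $\sigma_b(f) = \sigma_b(f')$, whereas part (\ref{sub:2}) also demands $\sigma_b(f) = f(b) \neq f'(b) = \sigma_b(f')$---a contradiction. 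Part (\ref{sub:3}) uses the same perturbation with $a$ in place of $b$: set $f' = f[a|\nee{f(a)}]$, so part (\ref{sub:2}) gives $\sigma_a(f) \neq \sigma_a(f')$, while $v_a^{f'} = v_a^f$ and (by part (\ref{sub:2})) declarations from $H(a) \setminus \{s\}$ are unchanged; if $\sigma_s(f) = \sigma_s(f')$ held, then $h_a^f = h_a^{f'}$ would force $\sigma_a$ to be unchanged, a contradiction. Hence $P(f)(s) \neq P(f')(s)$, as required.

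Part (\ref{sub:4}) leverages part (\ref{sub:3}) together with $|K| < \omega$. Consider the map $\psi \colon K \to K$ defined by $\psi(k) = \sigma_s(f[a|k])$; applying part (\ref{sub:3}) to $f[a|k_1]$ with the perturbation $a \mapsto k_2$ shows $\psi(k_1) \neq \psi(k_2)$ whenever $k_1 \neq k_2$, so $\psi$ is injective, hence bijective by finiteness. Given any $k \neq h_a^f(s) = \sigma_s(f)$, pick $k_\ast = \psi^{-1}(k)$ and set $g = f[a|k_\ast]$; note $k_\ast \neq f(a)$ since $\psi(f(a)) = h_a^f(s) \neq k$. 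Part (\ref{sub:2}) applied to coordinates $c \in H(a)\setminus\{s\}$ gives $h_a^g = h_a^f[s|k]$, and $v_a^g = v_a^f$, so
\[
\strategy{a}(h_a^f[s|k], v_a^f) = \sigma_a(g) = k_\ast \neq f(a) = \strategy{a}(h_a^f, v_a^f),
\]
which is the desired inequality. The main conceptual step is the preliminary observation binding (S1) and (S3) into $s \notin V(a)$; everything else is a careful tracking of which coordinates of the view and history vectors change under a single-hat perturbation, with part (\ref{sub:4}) being the only one that invokes a finiteness-based pigeonhole (bijectivity) argument.
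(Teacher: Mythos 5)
Your proof is correct and follows essentially the same perturbation-based approach as the paper's (sketched) proof: exploit that $\sigma_s$ is independent of $f(s)$ and that the declarations of all prisoners other than $s$ are pinned down by the at-most-one-error bound, then derive contradictions from single-hat alterations. The only differences are organizational---you prove part (2) first and derive part (1) from it rather than arguing both directly---and your part (4) makes explicit the injectivity-plus-finiteness (bijection $\psi$) step needed to realize the altered heard history $h_a^f[s|\nee{h_a^f(s)}]$ as arising from an actual coloring, a point the paper's sketch leaves implicit.
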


\begin{proof}
  (Sketch) In short:
  \begin{itemize}
    \item[(\ref{sub:1})] 
      If $s$ did not see some $a\in \SubPrSet{2-}$,
      one can alter $a$'s hat in two ways 
      to force both $s$ and $a$ 
      to err in the same time,
      contradicting at most one error.
    \item[(\ref{sub:2})] 
      If some $a\neq s$ err,
      one can alter $s$'s hat in a way that preserves $a$'s guess,
      thus letting
      both $s$ and $a$ err.

    \item[(\ref{sub:3})] 
      If changing $a$'s hat does not alter $s$'s guess, 
      then one can produce a scenario in which $a$ guesses incorrectly while all other prisoners remain correct, 
      which contradicts (2).

    \item[(\ref{sub:4})] 
      If $s$'s guess changes but prisoner $a$'s guess remains the same, 
      then one can produce a scenario in which $a$ guesses incorrectly while all other prisoners remain correct, 
      which, as in (3), contradicts (2).

  \end{itemize}
  Full details appear in the formal proof,
  closely mirroring standard arguments 
  about ``unique error''
  constraints in non-simultaneous protocols. 
\end{proof}

We now prove the converse directions 
for Theorem \ref{thm:FFVA} and Theorem \ref{thm:FIVA}:

\begin{theorem}[Continuation of Theorem \ref{thm:FFVA}]
  \label{thm:after-thm:FFVA}
  Suppose $K$ is finite with $3\le|K|<\omega$,
  $IN\ge2$,
  and $\mathcal{G}=\HatGame{A}{K}{V}{I}$ satisfies (S3).
  Then (b)$\Rightarrow$(a) holds:
  \begin{itemize}
    \item[(a)] $\mathcal{G}$ has (S1) and (S2).
    \item[(b)] $\PS{\mathcal{G}}{e}{\leq}{1}\neq\emptyset$. \qedhere
  \end{itemize}
\end{theorem}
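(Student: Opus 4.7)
The plan is to split the argument into (S1) and (S2) separately, starting from a fixed predictor $P\in\PS{\mathcal{G}}{e}{\leq}{1}$.

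For (S1), I will argue by contradiction. If $|\SubPrSet{1}|\geq 2$, Proposition~\ref{prop:FirstGroup} provides $P_1\in\PS{\mathcal{G}_1}{e}{\leq}{1}$ on the simultaneous game $\mathcal{G}_1=\HatGame{\SubPrSet{1}}{K}{V\restric\SubPrSet{1}}{1}$. Since at most one prisoner errs per coloring, the total number of correct guesses over all $|K|^{|\SubPrSet{1}|}$ colorings is at least $(|\SubPrSet{1}|-1)\cdot|K|^{|\SubPrSet{1}|}$, whereas Lemma~\ref{lem:average} fixes this total at $|\SubPrSet{1}|\cdot|K|^{|\SubPrSet{1}|-1}$. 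The resulting inequality $|\SubPrSet{1}|(|K|-1)\leq|K|$, together with $|K|\geq 3$, forces $|\SubPrSet{1}|<2$, the desired contradiction.

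For (S2) at $a=s$ there is nothing to prove, since Proposition~\ref{prop:useful}(\ref{sub:1}) already delivers $V(s)=A\setminus\{s\}$. For $a\in\SubPrSet{2-}$ I will suppose toward contradiction that some $b\in A\setminus(V(a)\cup H(a)\cup\{a\})$ exists; note $b\neq s$ because $s\in H(a)$. The strategy is to build two colorings $f,f'$ with identical observable state at $a$ — same view $f\restric V(a)$ and same heard declarations $\sigma_c$ for every $c\in H(a)$ — but with $f(a)\neq f'(a)$, since this forces $\sigma_a(f)=\sigma_a(f')$ and hence an error for $a$ on one of them, contradicting Proposition~\ref{prop:useful}(\ref{sub:2}). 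To do so I will fix an arbitrary $f$ and inspect the map $(k_1,k_2)\mapsto\sigma_s(\cv{f}{a,b}{k_1,k_2})$; by Proposition~\ref{prop:useful}(\ref{sub:3}) this map is injective, hence bijective, in each coordinate with the other held fixed, so it is a Latin square on $K\times K$. Consequently, for any $k_1\neq f(a)$ there is a unique $k_2$ making $\sigma_s$ unchanged, and this $k_2$ must differ from $f(b)$ (otherwise only $a$'s hat would change, again contradicting Proposition~\ref{prop:useful}(\ref{sub:3})); I set $f'=\cv{f}{a,b}{k_1,k_2}$.

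It then remains to verify that the state at $a$ really is preserved. The view matches because $a,b\notin V(a)$; the $s$-coordinate of $h_a$ matches by construction; and for $c\in H(a)\setminus\{s\}$, Proposition~\ref{prop:useful}(\ref{sub:2}) gives $\sigma_c(f)=f(c)=f'(c)=\sigma_c(f')$, since $f$ and $f'$ agree off $\{a,b\}$ and $c\notin\{a,b\}$. I expect this last point to be the main conceptual obstacle: the requirement that \emph{every} $c\in H(a)$, not merely $s$, produce the same declaration initially looks like a hard recursive condition on the downstream strategies $\strategy{c}$, but Proposition~\ref{prop:useful}(\ref{sub:2}) collapses it to the tautology that each such $c$ simply declares its own (unchanged) hat.
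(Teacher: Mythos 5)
Your proposal is correct. The (S1) half is essentially the paper's own argument: Proposition \ref{prop:FirstGroup} plus the counting bound of Lemma \ref{lem:average} forces $|\SubPrSet{1}|\,(|K|-1)\le|K|$, which is impossible for $|\SubPrSet{1}|\ge2$ and $|K|\ge3$. The (S2) half, however, takes a genuinely different route. The paper changes only $b$'s hat: Proposition \ref{prop:useful}(\ref{sub:3}) then flips $s$'s guess (so $s$ errs), and Proposition \ref{prop:useful}(\ref{sub:4}) propagates that flip to $a$'s guess (so $a$ errs as well), producing two simultaneous errors in a coloring chosen so that $s$ starts out correct. You instead perturb $a$'s and $b$'s hats in a compensating way so that $\sigma_s$ is \emph{unchanged}; together with Proposition \ref{prop:useful}(\ref{sub:2}) applied to the remaining prisoners of $H(a)$ (all of which lie outside $\{a,b\}$), this freezes $a$'s entire observable state while moving $a$'s own hat, so $a$ alone errs, contradicting Proposition \ref{prop:useful}(\ref{sub:2}) directly. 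Your route dispenses with Proposition \ref{prop:useful}(\ref{sub:4}) entirely, replacing it by the injective-implies-bijective step on finite $K$ that yields the compensating color $k_2$; since Proposition \ref{prop:useful}(\ref{sub:4}) itself already presupposes $|K|<\omega$, neither approach is more general, but yours relies on one fewer (only sketched) sub-lemma and needs no special choice of starting coloring. Both arguments are sound.
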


\begin{proof}
  Assume $\lnot(\text{S1})$:
  there are at least two prisoners in $\SubPrSet{1}$.
  As $A$ is finite,
  $\SubPrSet{1}$ is also finite.
  From Proposition \ref{prop:FirstGroup},
  we get a predictor on the simultaneous 
  subgame $\mathcal{G}_1
  	=\HatGame{\SubPrSet{1}}{K}{V\restric\SubPrSet{1}}{1}$ 
  guaranteeing at most one error.
  But then by Lemma \ref{lem:average},
  the average correct count 
  must be $|\SubPrSet{1}|/|K|$ 
  but also at least $|\SubPrSet{1}|-1$.
  For $|\SubPrSet{1}|\ge2$ and $|K|\ge3$, this is impossible,
  so we have a contradiction. Hence (S1) must hold,
  that is, $\SubPrSet{1}=\{s\}$.

  Next assume $\lnot(\text{S2})$. 
  Then there exist some $a\in\SubPrSet{n}$ and $b\in\SubPrSet{n-}$ such that $\lnot a\CanSee{V} b$. 
  From (S3), if a prisoner does not see someone, 
  they must declare in the same inning or later. 
  Since $s$ sees everyone by Proposition \ref{prop:useful}(\ref{sub:1}), 
  we have $n\ge 2$. 

  In this case, if $a$ does not see $b$, 
  then starting from a coloring where $s$ guesses correctly, 
  one can produce a scenario by suitably altering hats and using Proposition \ref{prop:useful}(\ref{sub:3}) and (\ref{sub:4}). 
  In this scenario, both $s$ and $a$ guess incorrectly at the same time. 
  This contradicts $\PS{\mathcal{G}}{e}{\leq}{1}$.
\end{proof}

\begin{theorem}[Continuation of Theorem \ref{thm:FIVA}]
  \label{thm:after-thm:FIVA}
  Suppose $K$ is infinite,
  $IN\ge2$,
  and $\mathcal{G}=\HatGame{A}{K}{V}{I}$ satisfies (S3).
  Then (b)$\Rightarrow$(a) holds:
  \begin{itemize}
    \item[(a)] (S1) and (S4) hold.
    \item[(b)] $\PS{\mathcal{G}}{e}{\leq}{1}\neq\emptyset$. \qedhere
  \end{itemize}
\end{theorem}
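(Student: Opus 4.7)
The plan is to adapt the strategy of Theorem \ref{thm:after-thm:FFVA} by establishing (S1) first and then deriving (S4) as a direct consequence. The infinitude of $K$ takes over the role that the hypothesis $|K|\ge 3$ played in the finite-color case, and in fact simplifies the first step considerably.

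For (S1), I would argue by contradiction, assuming $|\SubPrSet{1}|\ge 2$. Since $A$ is finite (and hence so is $\SubPrSet{1}$), Proposition \ref{prop:FirstGroup} applied to a given $P\in\PS{\mathcal{G}}{e}{\leq}{1}$ yields a predictor $P_1\in\PS{\mathcal{G}_1}{e}{\leq}{1}$ on the simultaneous subgame $\mathcal{G}_1=\HatGame{\SubPrSet{1}}{K}{V\restric\SubPrSet{1}}{1}$. From $|\SubPrSet{1}|\ge 2$ it follows that $P_1$ guarantees at least one correct guess, so $P_1\in\PS{\mathcal{G}_1}{c}{\geq}{1}$. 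However, $\SubPrSet{1}$ is finite and $K$ is infinite, so $|\SubPrSet{1}|<|K|$, and Corollary \ref{cor:A<K-notMP} forces $\PS{\mathcal{G}_1}{c}{\geq}{1}=\emptyset$, a contradiction. Hence $|\SubPrSet{1}|=1$, that is, (S1) holds.

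For (S4), with (S1) now established together with (S3) and $P\in\PS{\mathcal{G}}{e}{\leq}{1}$ in hand, Proposition \ref{prop:useful}(\ref{sub:1}) applies directly and yields $V(s)=A\setminus\{s\}$ for the unique first speaker $s\in\SubPrSet{1}$, which is exactly (S4).

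The main obstacle is essentially absorbed by the preparatory propositions: Proposition \ref{prop:FirstGroup} performs the reduction to a simultaneous game on $\SubPrSet{1}$, and Proposition \ref{prop:useful}(\ref{sub:1}) encodes the ``unique first speaker must see everyone'' argument. Unlike the finite-color case, no analogue of (S2) needs to be shown, because only the weaker (S4) is claimed. The one subtle point worth verifying is that the hypotheses of Corollary \ref{cor:A<K-notMP} genuinely apply here — they do, since $\SubPrSet{1}$ is finite and $K$ is infinite — so the entire argument goes through in two short steps.
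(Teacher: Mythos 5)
Your proposal is correct and follows essentially the same route as the paper: reduce to the simultaneous subgame on $\SubPrSet{1}$ via Proposition \ref{prop:FirstGroup}, observe that an at-most-one-error predictor on two or more prisoners guarantees at least one correct guess, contradict the impossibility result for finitely many prisoners with infinitely many colors (you cite Corollary \ref{cor:A<K-notMP}, the paper cites Theorem \ref{thm:FIVanyS}, which is the same fact), and then obtain (S4) from Proposition \ref{prop:useful}(\ref{sub:1}). No gaps.
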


\begin{proof}
  Arguing similarly,
  if $\SubPrSet{1}$ contains at least two prisoners,
  Proposition \ref{prop:FirstGroup} 
  would yield a 1-error predictor 
  in the finite subgame restricted to prisoners who declare in the initial inning.
  On the other hand,
  Theorem \ref{thm:FIVanyS} forbids 
  guaranteeing at least one correct guess 
  (equivalently forbids guaranteeing at most $|A|-1$ errors) 
  among finitely many prisoners
  if $K$ is infinite.
  Hence (S1) must hold,
  that is, $|\SubPrSet{1}|=1$.
  Then (S4) follows 
  from Proposition \ref{prop:useful}(\ref{sub:1}) 
  whenever (S1), (S3),
  and $\PS{\mathcal{G}}{e}{\leq}{1}\neq\emptyset$ hold.
\end{proof}

In these proofs,
(S3) was used to establish the reverse 
implication (b)$\Rightarrow$(a).
It is still open whether Theorem \ref{thm:F2VA} similarly admits such a converse 
under some condition (Question \ref{q:1}).

\section{Non-simultaneous puzzle with infinite prisoners}
\label{section:InfinitePrisoner}

In this section, assume $A$ is infinite.

We extend the idea of Lemma \ref{lem:FGVA} to show that,
if a suitable ``infinite analog''of summation exists,
then we can ensure at most one prisoner guesses incorrectly.
Since conventional summation does not directly apply to infinitely many hats,
we replace it with a ``parity function'':

\begin{definition}[]
  Let $X$ be a set,
  $(G,+)$ an abelian group,
  and consider $\map{\phi}{\MapSet{X}{G}}{G}$.
  We say $\phi$ is an $\MapSet{X}{G}$-parity function if for all $f\in \MapSet{X}{G}$,
  $x\in X$,
  and $g_1,g_2\in G$,
  \[    \phi\bigl(\cv{f}{x}{g_1}\bigr) \;-\;\phi\bigl(\cv{f}{x}{g_2}\bigr)=g_2-g_1.  \]
  In other words,
  changing one coordinate of $f$ from $g_1$ to $g_2$ 
  shifts the value of $\phi(f)$ by $g_2-g_1$ in a uniform way.
\end{definition}

This notion follows \cite{Hat002} and \cite{Hat006},
though we employ a version generalized in \cite{Hat006}.
The key point is that if $\phi$ exists, 
one can replicate the same 
``Hint + Subtraction'' strategy from Lemma \ref{lem:FGVA} without a well-defined infinite sum.
In fact:

\begin{theorem}[]
  \label{thm:IGVA}
  Let $A$ be infinite and $(K,+)$ be an abelian group.
  Suppose there is an $\MapSet{A}{K}$-parity function $\phi$.
  Then in any game $\mathcal{G}=\HatGame{A}{K}{V}{I}$ with $IN\ge2$ satisfying
  (S1) and (S2), 
  there is a predictor in $\PS{\mathcal{G}}{e}{\leq}{1}$
  ensuring at most one error in every coloring.
\end{theorem}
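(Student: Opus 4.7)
The plan is to transport the strategy of Lemma \ref{lem:FGVA} to the infinite setting, replacing the finite sum $\sum_b f(b)$ by the parity function $\phi$. The single algebraic fact driving the argument is an immediate consequence of the parity axiom: if $f_1, f_2 \in \MapSet{A}{K}$ agree on $A \setminus \{x\}$, then $\phi(f_1) - \phi(f_2) = f_2(x) - f_1(x)$. Thus once $\phi$ is evaluated on a known ``baseline'' coloring, the value at any single coordinate can be recovered from knowledge of all the other coordinates together with the baseline value.

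By (S1) write $\SubPrSet{1} = \{s\}$, and by (S2) note $V(s) = A \setminus \{s\}$. Fix the identity $0 \in K$. First, define $s$'s strategy by $\strategy{s}(f_s) := \phi(\hat{f}_s)$, where $\hat{f}_s$ extends $f_s$ to $A$ by $\hat{f}_s(s) = 0$; thus $\sigma_s(f) = \phi(\cv{f}{s}{0})$. Next, for each $a \in \SubPrSet{n}$ with $n \geq 2$ and inputs $h_a \in \MapSet{H(a)}{K}$, $f_a \in \MapSet{V(a)}{K}$, form the coloring $g_a \in \MapSet{A}{K}$ given by $g_a(b) = h_a(b)$ on $H(a) \setminus \{s\}$, $g_a(b) = f_a(b)$ on $V(a) \setminus H(a)$, and $g_a(s) = g_a(a) = 0$. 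Condition (S2) guarantees $H(a) \cup V(a) \cup \{a\} = A$, so these cases cover all of $A$ and $g_a$ is well-defined. Declare $\strategy{a}(h_a, f_a) := \phi(g_a) - h_a(s)$.

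Correctness follows by induction on $n$. Assuming every prisoner in $\SubPrSet{2-(n-1)}$ guesses correctly under $P$, for $a \in \SubPrSet{n}$ induction gives $h_a^f(b) = f(b)$ on $H(a) \setminus \{s\}$, while $h_a^f(s) = \sigma_s(f) = \phi(\cv{f}{s}{0})$ by construction. Hence $g_a$ agrees with $\cv{f}{s}{0}$ on $A \setminus \{a\}$, with $g_a(a) = 0$ and $\cv{f}{s}{0}(a) = f(a)$. Applying the parity axiom to this single-coordinate difference yields $\phi(g_a) - \phi(\cv{f}{s}{0}) = f(a)$, so $\sigma_a(f) = \phi(g_a) - h_a^f(s) = f(a)$. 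Only $s$ may therefore guess incorrectly, placing $P$ in $\PS{\mathcal{G}}{e}{\leq}{1}$.

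The argument carries no deep obstacle; the one delicate point is the handling of $s$'s own hat, which $s$ cannot see. Announcing $\phi(\cv{f}{s}{0})$ rather than $\phi(f)$ forces the auxiliary coloring $g_a$ constructed by each later prisoner to match the same convention at the $s$-coordinate. With this convention, $g_a$ and $\cv{f}{s}{0}$ differ in exactly one coordinate, so the parity axiom applies in a single step and no infinitary strengthening of the axiom (which would not be available) is needed.
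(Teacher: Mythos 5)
Your proposal is correct and follows essentially the same route as the paper: the hint-giver announces $\phi(\cv{f}{s}{0})$, each later prisoner assembles the same auxiliary coloring (heard values off $s$, seen values elsewhere, $0$ at $s$ and at themselves) and subtracts the hint, and the parity axiom applied to the single differing coordinate $a$ recovers $f(a)$. Your write-up in fact supplies the inductive verification that the paper only gestures at as ``a direct extension of Lemma \ref{lem:FGVA}.''
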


\begin{proof}
  Let $0$ be the identity element of $(K,+)$
  and  $\phi:\MapSet{A}{K}\to K$
  an $\MapSet{A}{K}$-parity function.
  By (S1), let $\SubPrSet{1}=\{s\}$.
  Define for $f_s\in \MapSet{V(s)}{K}$,
  \[    \strategy{s}(f_s)=\phi\bigl(f_s\cup\{\op{s}{0}\}\bigr),  \]
  For $a\in \SubPrSet{2-}$,
  and for two colorings $f_a\in \MapSet{V(a)}{K},h_a\in \MapSet{H(a)}{K}$,
  \[    
    \strategy{a}(h_a,f_a)\;=\;\phi\bigl(\;\cv{h_a}{s}{0}\;\cup\;\{\op{a}{0}\}
		\cup f_a\restric(\dom(f_a)\setminus\dom(h_a))\;\bigr)\;-\;h_a(s).  
  \]
  In any coloring $f\in \MapSet{A}{K}$,
  prisoner $s$ might fail,
  but everyone else guesses correctly,
  yielding $\PS{\mathcal{G}}{e}{\leq}{1}$.
  The argument is a direct extension of the finite case 
  from Lemma \ref{lem:FGVA},
  using the property of $\phi$ that 
  changing one coordinate shifts the output by a predictable amount.
\end{proof}

The remaining question is whether such a parity function exists.
If we assume the Axiom of Choice,
it is straightforward to build one.
The construction is basically the ``pick a representative in each $=^*$-equivalence class'' approach from \cite{Hat002} (Lemma 6).

\begin{lemma}[]
  \label{lem:AC-PF}
  Under the Axiom of Choice,
  for any infinite set $X$ and abelian group $(G,+)$,
  there is an $\MapSet{X}{G}$-parity function.
\end{lemma}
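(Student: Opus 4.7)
The plan is to use the Axiom of Choice exactly once to choose a representative from each equivalence class under the relation $=^{*}$ on $\MapSet{X}{G}$ (where $f =^{*} g$ iff $f$ and $g$ differ on only finitely many coordinates), and then to define $\phi$ by measuring, as a finite sum in $G$, how $f$ differs from its chosen representative. This is essentially the same ``pick a representative'' trick that underlies Theorem \ref{thm:FEP}, upgraded so as to extract a $G$-valued quantity rather than just a corrected function.

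First, I would fix by AC a selector $\varphi$ with $\varphi([f]) \in [f]$ for every $=^{*}$-equivalence class of $\MapSet{X}{G}$, and then set
\[
  \phi(f) \;=\; \sum_{x \in X,\ f(x) \neq \varphi([f])(x)} \bigl(\varphi([f])(x) - f(x)\bigr).
\]
Since $f =^{*} \varphi([f])$, the indexing set is finite, so this is a well-defined element of the abelian group $G$ (independent of summation order because $G$ is abelian).

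To verify the parity identity, fix $f \in \MapSet{X}{G}$, $x \in X$, and $g_1, g_2 \in G$, and set $f_1 = \cv{f}{x}{g_1}$ and $f_2 = \cv{f}{x}{g_2}$. Because $f_1$ and $f_2$ differ only at the single coordinate $x$, they lie in the same $=^{*}$-class, so they share a common representative $r = \varphi([f_1]) = \varphi([f_2])$. For every $y \neq x$, the $y$-term contributes equally to $\phi(f_1)$ and to $\phi(f_2)$ (it depends only on $f(y)$ and $r(y)$), hence cancels in the difference. A short case split on whether each of $g_1, g_2$ equals $r(x)$ then shows that in all four cases $\phi(f_1) - \phi(f_2) = g_2 - g_1$. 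There is no serious obstacle; the only point to watch is the sign convention in the defining sum, since writing $\varphi([f])(x) - f(x)$ rather than its negative is precisely what produces $g_2 - g_1$ on the right-hand side and matches the stated definition.
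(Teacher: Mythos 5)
Your proposal is correct and follows essentially the same route as the paper's own proof: use Choice to fix a selector $\varphi$ for the $=^{*}$-classes of $\MapSet{X}{G}$ and define $\phi(f)$ as the finite sum of the coordinatewise discrepancies between $f$ and $\varphi([f])$, then verify the parity identity by noting that $\cv{f}{x}{g_1}$ and $\cv{f}{x}{g_2}$ share a representative so all terms off the coordinate $x$ cancel. Your attention to the sign convention is warranted --- summing $\varphi([f])(x)-f(x)$ as you do yields $g_2-g_1$ directly, whereas the paper's sketch writes the terms with the opposite sign (which only changes $\phi$ by an overall negation and is thus harmless).
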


\begin{proof}
  Consider the equivalence relation $=^*$ on $\MapSet{X}{G}$:
  $f=^{*}g$ iff $f$ and $g$ differ only at finitely many points of $X$.
  Using Choice, pick a selector $\varphi$ 
  so that $\varphi([f])\in[f]$.
  Define $\phi(f)$ 
  by summing up $f(x)-\varphi([f])(x)$ 
  over those $x$ where $f(x)\neq \varphi([f])(x)$ (only finitely many).
  One verifies $\phi$ is a parity function by checking that 
  altering one coordinate changes $\phi(f)$ 
  by exactly the difference in the group values.
\end{proof}

Hence whenever $(K,+)$ is (or can be made into) an abelian group,
and $A$ is infinite,
we can construct a single-error predictor for any game 
with one ``hint-giver''who sees or hears all others.
Actually,
by the well-known fact that under Choice,
one can define a group structure on any set $K$ \cite{Logic012} Corollary (f),
plus the existence of a parity function (Lemma \ref{lem:AC-PF}),
Theorem \ref{thm:IGVA} immediately yields:

\begin{corollary}[]
  \label{cor:IAVA}
  Assume the Axiom of Choice. Let $A$ be infinite,
  $K$ any set,
  and consider a game $\mathcal{G}=\HatGame{A}{K}{V}{I}$ 
  with $IN\ge2$ satisfying (S1) and (S2).
  Then $\PS{\mathcal{G}}{e}{\leq}{1}\neq\emptyset$,
  that is, there is a predictor 
  with at most one error in every coloring.
\end{corollary}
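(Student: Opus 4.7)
The plan is to chain together the three ingredients already prepared in the paper: a group structure on $K$ supplied by the Axiom of Choice, the parity function of Lemma \ref{lem:AC-PF}, and the hint-and-subtract strategy of Theorem \ref{thm:IGVA}. Since the hypotheses of this corollary match the hypotheses of Theorem \ref{thm:IGVA} except for the fact that $K$ is only assumed to be a set (rather than an abelian group), the whole task reduces to equipping $K$ with an abelian-group operation so that Theorem \ref{thm:IGVA} can be invoked verbatim.

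First I would invoke the cited result \cite{Logic012} Corollary (f): under the Axiom of Choice, every nonempty set admits an abelian-group structure. Since $|K|\ge 2$ by standing assumption, we fix some $+$ on $K$ making $(K,+)$ an abelian group. This is the only place in the argument where Choice is used in a nontrivial way at the level of $K$; the remainder of the proof uses Choice only through Lemma \ref{lem:AC-PF}.

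Next I would apply Lemma \ref{lem:AC-PF} to the infinite set $A$ and the abelian group $(K,+)$ just constructed. This yields an $\MapSet{A}{K}$-parity function $\phi$. At this point the hypotheses of Theorem \ref{thm:IGVA} are satisfied: $A$ is infinite, $(K,+)$ is an abelian group, a parity function $\phi$ exists, $IN\ge 2$, and the game $\mathcal{G}$ satisfies (S1) and (S2). Applying Theorem \ref{thm:IGVA} directly produces a predictor $P \in \PS{\mathcal{G}}{e}{\leq}{1}$, finishing the proof.

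There is essentially no obstacle, since each step is black-box. The only conceptual point worth flagging is that the group structure placed on $K$ is entirely arbitrary and need not be related to any preexisting structure on $K$; the prisoners' strategies depend on this chosen structure, but the predictor they produce succeeds regardless. The reliance on \cite{Logic012} Corollary (f) is what allows us to drop the ``abelian group'' hypothesis on $K$ from Theorem \ref{thm:IGVA}, so the strength of the Axiom of Choice is really being used twice: once to make $K$ into a group, and once (inside Lemma \ref{lem:AC-PF}) to choose a representative in each $=^{*}$-equivalence class of colorings.
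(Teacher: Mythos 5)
Your proposal is correct and follows exactly the route the paper takes: the corollary is derived by using Choice (via \cite{Logic012}) to endow $K$ with an abelian group structure, then invoking Lemma \ref{lem:AC-PF} for the parity function, and finally applying Theorem \ref{thm:IGVA}. Your closing remark that Choice is used twice—once for the group structure and once inside the parity-function construction—matches the paper's own commentary in Section \ref{section:TwoTypeInInfinitePuzzle}.
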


\section{Two infinite prisoner puzzles}
\label{section:TwoTypeInInfinitePuzzle}

Both in simultaneous and non-simultaneous puzzles with infinite prisoners,
the best-known results for guaranteeing 
only finitely many errors use the Axiom of Choice.
Specifically,
Theorem \ref{thm:FEP} yields a finite error predictor 
for the complete-visibility,
simultaneous case. 
Corollary \ref{cor:IAVA} yields a single-error predictor 
for the non-simultaneous case under conditions 
that allow a ``hint''speaker.
Both proofs use Choice in two places:
to endow $K$ with an abelian group structure 
(if it isn't yet) 
and to construct a parity function.
In fact,
we can replace the second usage of Choice by a certain property 
of a predictor from the simultaneous puzzle.
The relevant property is ``robustness'',
meaning the predictor's output is insensitive to changes 
on finitely many prisoners' hats:

\begin{definition}
  For $\mathcal{G}=\HatGame{A}{K}{V}{1}$,
  say a predictor $P$ is robust if $f=^{*}g$ implies $P(f)=P(g)$.
  That is,
  finite changes to $f$ do not alter the collective guesses $P(f)$.
  Denote the set of robust predictors by $\robustPS{\mathcal{G}}$.
\end{definition}

Note that the finite error predictor 
which we constructed in the proof of Theorem \ref{thm:FEP} is, in fact, robust.

\begin{corollary}[Gabay--O'Connor]
  Under the Axiom of Choice,
  for game $\mathcal{G}=\HatGame{A}{K}{V}{1}$ with infinite $A$ and complete $V$,
  there is a finite error predictor that is also robust,
  that is, in $\FEPS{\mathcal{G}}\cap\robustPS{\mathcal{G}}$.
\end{corollary}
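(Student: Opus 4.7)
The plan is to reuse the very predictor $P$ produced in the proof of Theorem \ref{thm:FEP} and check that it satisfies the additional robustness clause; since $P \in \FEPS{\mathcal{G}}$ is already known, this will give $P \in \FEPS{\mathcal{G}}\cap\robustPS{\mathcal{G}}$. Recall the construction: one uses the Axiom of Choice to pick a selector $\varphi$ on the $=^{*}$-equivalence classes of $\MapSet{A}{K}$, fixes some $k\in K$, and defines $\strategy{a}(v_a^f) = \varphi([\cv{f}{a}{k}])(a)$, which is well-defined precisely because completeness of $V$ makes $v_a^f$ sufficient to reconstruct $\cv{f}{a}{k}$.

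The key step is to fix two colorings $f, g \in \MapSet{A}{K}$ with $f =^{*} g$ and show $\sigma_a(f) = \sigma_a(g)$ for every $a \in A$. Here I would use the observation that overwriting the $a$-coordinate of both $f$ and $g$ by the common value $k$ preserves the $=^{*}$-relation: the colorings $\cv{f}{a}{k}$ and $\cv{g}{a}{k}$ agree at $a$ and disagree elsewhere only at positions where $f$ and $g$ already disagreed, which is a finite set. Consequently $[\cv{f}{a}{k}] = [\cv{g}{a}{k}]$, the selector $\varphi$ returns the same representative, and reading off the $a$-coordinate yields $\sigma_a(f) = \sigma_a(g)$. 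Since $a$ was arbitrary, $P(f) = P(g)$, establishing robustness.

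There is no real obstacle here. The only point worth flagging is conceptual rather than technical: the definition of $\strategy{a}$ deliberately factors through the $=^{*}$-class of $\cv{f}{a}{k}$, and that class is exactly the invariant preserved by finite perturbations of $f$, so robustness is built into the Gabay--O'Connor construction by design. The corollary merely records this already-present feature of the predictor, and no new appeal to Choice or any new idea is required.
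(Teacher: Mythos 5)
Your proposal is correct and follows exactly the route the paper intends: the paper offers no separate proof but simply remarks that the predictor built in Theorem \ref{thm:FEP} is already robust, and your verification --- that $f=^{*}g$ implies $[\cv{f}{a}{k}]=[\cv{g}{a}{k}]$, hence $\sigma_a(f)=\sigma_a(g)$ for every $a$ --- is precisely the omitted check. No gaps.
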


Furthermore,
a robust finite error predictor yields a parity function:

\begin{theorem}[]
  Suppose $A$ is infinite and $(K,+)$ is an abelian group. 
  If there is a robust finite error predictor 
  in $\mathcal{G}=\HatGame{A}{K}{V}{1}$, 
  then there is an $\MapSet{A}{K}$-parity function
\end{theorem}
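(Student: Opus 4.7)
The plan is to construct $\phi$ directly from $P$ by summing, coordinate by coordinate, the differences between $P(f)$ and $f$. For each coloring $f\in\MapSet{A}{K}$ let $E(f):=\{a\in A\mid P(f)(a)\neq f(a)\}$; because $P$ is a finite error predictor, $E(f)$ is finite, so I can define
$$\phi(f) \;:=\; \sum_{a\in E(f)}\bigl(P(f)(a)-f(a)\bigr),$$
with the sum computed in the abelian group $(K,+)$. Equivalently this is $\sum_{a\in A}(P(f)(a)-f(a))$, understood with the convention that all but finitely many summands vanish. The abelian hypothesis makes this unambiguous (no issue of summation order), and no choice principle is invoked.

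To verify the parity condition, I would fix $f\in\MapSet{A}{K}$, $x\in A$, and $g_1,g_2\in K$, and set $f_i:=\cv{f}{x}{g_i}$ for $i=1,2$. Since $f_1$ and $f_2$ agree off the single coordinate $x$, we have $f_1=^{*}f_2$, and robustness of $P$ forces $P(f_1)=P(f_2)$. Combining the two finite sums defining $\phi(f_1)$ and $\phi(f_2)$ into a single sum indexed by $A$, the $P$-contributions cancel termwise, and every index $a\neq x$ contributes $0$ because $f_1(a)=f_2(a)$. Only the term at $x$ survives:
$$\phi(f_1)-\phi(f_2) \;=\; \sum_{a\in A}\bigl(f_2(a)-f_1(a)\bigr) \;=\; f_2(x)-f_1(x) \;=\; g_2-g_1,$$
which is exactly the parity identity demanded by the definition.

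I do not expect any real obstacle. The subtlest point is checking that the rearrangement in the displayed calculation is legitimate, but this is immediate: the support of every summand that appears is contained in the finite set $E(f_1)\cup E(f_2)\cup\{x\}$, so every manipulation reduces to a finite calculation in $(K,+)$. The two properties of $P$ play complementary roles — the finite-error property makes $\phi(f)$ a well-defined element of $K$, while robustness is what allows the $P$-terms to cancel when a single coordinate is changed; dropping either hypothesis breaks the construction. In particular, the visibility graph $V$ plays no role, and the Axiom of Choice is not invoked anywhere in the argument.
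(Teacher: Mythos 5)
Your construction is exactly the one in the paper: define $\phi(f)=\sum_{a\colon P(f)(a)\neq f(a)}\bigl(P(f)(a)-f(a)\bigr)$, use the finite-error property for well-definedness and robustness to cancel the $P$-terms when a single coordinate changes. Your verification of the parity identity is correct and simply fills in the details the paper leaves to the reader, so there is nothing to add.
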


\begin{proof}
  Let $P\in \FEPS{\mathcal{G}}\cap\robustPS{\mathcal{G}}$.
  For $f\in \MapSet{A}{K}$,
  define
  \[    \phi(f)=\sum_{a\colon f(a)\neq P(f)(a)} \bigl[P(f)(a)-f(a)\bigr].   \]
  By the robustness and finite error property,
  changing one prisoner's hat color changes $P(f)$ consistently,
  letting us prove the parity condition 
  in exactly the same manner as Lemma \ref{lem:AC-PF}.
\end{proof}

Combining these facts,
we can prove, without Choice,
that 
if the simultaneous game on $(A,K)$ admits a robust finite error predictor,
then the non-simultaneous version with (S1) and (S2) 
also admits a single-error predictor. 

\begin{corollary}[]
  Let $A$ be infinite and $(K,+)$ an abelian group.
  If the simultaneous game $\mathcal{G}_1=\HatGame{A}{K}{V}{1}$ 
  has a predictor in $\FEPS{\mathcal{G}_1}\cap\robustPS{\mathcal{G}_1}$,
  then for the non-simultaneous game 
  $\mathcal{G}=\HatGame{A}{K}{V}{I}$ with $IN\ge2$ satisfying (S1) and (S2),
  we can construct a predictor in $\PS{\mathcal{G}}{e}{\leq}{1}$.
\end{corollary}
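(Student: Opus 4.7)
The plan is to chain the two results that immediately precede this corollary. First, I would feed the hypothesized predictor $P \in \FEPS{\mathcal{G}_1} \cap \robustPS{\mathcal{G}_1}$ into the preceding theorem (``a robust finite error predictor yields a parity function''). Since $A$ is infinite and $(K,+)$ is an abelian group by hypothesis, that theorem applies directly and produces an $\MapSet{A}{K}$-parity function $\phi$. It is worth noting that the parity-function-extracting theorem places no condition on the visibility graph $V$, so the fact that the same $V$ appears in both $\mathcal{G}_1$ and $\mathcal{G}$ causes no difficulty.

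Second, I would apply Theorem \ref{thm:IGVA} to $\mathcal{G} = \HatGame{A}{K}{V}{I}$. All of its hypotheses ($A$ infinite, $(K,+)$ abelian, $IN \ge 2$, and (S1), (S2)) are present in the statement of the corollary, and the required $\MapSet{A}{K}$-parity function is the $\phi$ obtained in the first step. Theorem \ref{thm:IGVA} then outputs precisely a predictor in $\PS{\mathcal{G}}{e}{\leq}{1}$, which is the conclusion we seek. Concretely, the single ``hint giver'' $s \in \SubPrSet{1}$ announces $\phi(v_s^f \cup \{\op{s}{0}\})$, and each later prisoner recovers their own hat by combining the hint with $\phi$-invariance on the remaining coordinates, as in the construction proving Theorem \ref{thm:IGVA}.

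The comparison with Corollary \ref{cor:IAVA} is the real point of the argument. There, the Axiom of Choice was used twice: once to place a group structure on $K$, and once to build a parity function via a selector on $=^*$-equivalence classes. Here both uses are eliminated: the group structure is supplied by hypothesis, and the parity function is manufactured from the assumed robust finite error predictor. No further appeal to Choice is required, so the corollary holds in ZF modulo the single external ingredient of a robust finite error predictor on $\mathcal{G}_1$.

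There is no serious obstacle in this proof plan; it is essentially a two-step composition of implications already established in the paper. The only thing to verify carefully is that the two theorems being chained genuinely share hypotheses — in particular, that the same visibility graph $V$ may be used throughout and that no Choice-dependent intermediate step sneaks in — both of which are immediate from inspection of the statements.
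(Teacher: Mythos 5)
Your proposal is correct and matches the paper's intended argument exactly: the corollary is stated immediately after the sentence ``Combining these facts, we can prove, without Choice, \dots'', and the intended proof is precisely the two-step composition you describe --- extract an $\MapSet{A}{K}$-parity function from the robust finite error predictor via the preceding theorem, then feed it into Theorem \ref{thm:IGVA}. Your observations that the parity-function theorem imposes no condition on $V$ and that both uses of Choice from Corollary \ref{cor:IAVA} are thereby eliminated are also accurate.
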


\section{Questions}
\label{section:questions}

This paper has focused on hat puzzles 
in which prisoners do not declare simultaneously.

For the finite case,
we saw that the existence of a ``best possible''
predictor (either always at least one correct or at most one error) 
once again depends on the number of colors relative 
to the number of prisoners,
mirroring the simultaneous scenario.

When there are two colors and the puzzle is simultaneous,
it is known (for example, Theorem \ref{thm:F2VcyclicS}) 
that having a cyclic visibility graph is equivalent to guaranteeing 
at least one correct guess.
For the non-simultaneous puzzle with two colors,
Theorem \ref{thm:F2VA} established the 
``(a)$\Rightarrow$(b)''direction,
but the converse ``(b)$\Rightarrow$(a)''
remains open:

\begin{question}[Continuation of Theorem \ref{thm:F2VA}]
  \label{q:1}
  Consider a game $\mathcal{G}=\HatGame{A}{K}{V}{I}$
  where $|K|=2$,
  $IN\ge2$.
  \begin{itemize}
    \item[(a)] $\mathcal{G}$ satisfies (S4), (S5), and (S6).
    \item[(b)] $\PS{\mathcal{G}}{e}{\leq}{1}\neq\emptyset$.
  \end{itemize}
  We have shown (a)$\Rightarrow$(b).
  Does (b)$\Rightarrow$(a)$\,$ hold?
  If not,
  what additional conditions on the game would yield 
  the implication (b)$\Rightarrow$(a)?
\end{question}

For the other finite cases 
(3 or more colors,
or infinitely many colors),
we obtained statements that are essentially equivalent 
to the existence of a predictor with at most one error,
provided (S3) also holds.
Specifically,
Theorems \ref{thm:FFVA} and \ref{thm:after-thm:FFVA} 
handle the finite $|K|\ge3$ case; 
Theorems \ref{thm:FIVA} and \ref{thm:after-thm:FIVA} 
handle the infinite color case.
However, in these proofs,
(S3) was assumed to demonstrate the converse direction.
We ask whether (S3) can be eliminated:

\begin{question}[Continuation of Theorem \ref{thm:after-thm:FFVA} 
and Theorem \ref{thm:after-thm:FIVA}]
  In Theorems \ref{thm:after-thm:FFVA} and \ref{thm:after-thm:FIVA},
  is (S3) essential for proving (b)$\Rightarrow$(a)?
  Or can we remove (S3) and still prove the same equivalences?
\end{question}

Finally,
for a non-simultaneous puzzle with infinite prisoners,
we relied on the existence of a parity function for the color group 
to guarantee that all but possibly one prisoner guess correctly.
It is still open whether the existence of such a parity function is also necessary,
or whether one might need an analogue of (S3) or similar constraints.

\begin{question}[Analogue of Theorem \ref{thm:after-thm:FFVA} in the infinite setting]
  Let $A$ be infinite and $(K,+)$ an abelian group. We know that:
  \begin{itemize}
    \item[(a)] There is an $\MapSet{A}{K}$-parity function.
    \item[(b)] 
      If $\mathcal{G}=\HatGame{A}{K}{V}{I}$ 
      with $IN\ge2$ satisfies
      (S1) and (S2),
      then $\PS{\mathcal{G}}{e}{\leq}{1}\neq\emptyset$.
  \end{itemize}
  We have shown (a)$\Rightarrow$(b).
  Is (b)$\Rightarrow$(a)$\,$ true?
  Perhaps an additional condition akin to (S3) is required,
  or some other structural assumption?
\end{question}

In the special case $A=\omega$ and $K=2$,
Lemma 5 of \cite{Hat002} shows (a) and (b) are equivalent.
We suspect that for more general $A,K$,
we 
may need an assumption analogous to (S3)  
to make (b)$\Rightarrow$(a) hold.

\bigskip

\begin{quote}
	\textsc{Souji SHIZUMA}\\
	Graduate School of Science, Osaka Prefecture University\\
	3--3--138 Sugimoto, Sumiyohsi Osaka-shi 558--8585 JAPAN\\
	\texttt{dd305001@st.osakafu-u.ac.jp}
\end{quote}

\end{document}